
\documentclass{amsart}
\usepackage{amssymb}

\usepackage[pagebackref]{hyperref}

\usepackage{enumerate}
\usepackage[all]{xy}
\SelectTips{eu}{}
\CompileMatrices



\newcommand{\sst}{\scriptstyle}

\newcommand{\ges}{{\sst\geqslant}}
\newcommand{\les}{{\sst\leqslant}}

\newcommand{\betti}[3]{\beta_{#1}^{#2}(#3)}

\newcommand{\chr}{\operatorname{char}}

\newcommand{\depth}{\operatorname{depth}}

\newcommand{\hilb}[2]{\operatorname{Hilb}_{#1}(#2)}
\newcommand{\po}[3][S]{\operatorname{P}^{#1}_{#2}(#3)}

\newcommand{\hh}[1]{\operatorname{H}(#1)}
\newcommand{\HH}[2]{\operatorname{H}_{#1}(#2)}

\newcommand{\Ker}{\operatorname{Ker}}
\newcommand{\Coker}{\operatorname{Coker}}

\newcommand{\lra}{\longrightarrow}

\newcommand{\ov}{\overline}
\newcommand{\pd}{\operatorname{pd}}
\newcommand{\rank}{\operatorname{rank}}

\newcommand{\Ext}[4]{\operatorname{Ext}^{#1}_{#2}(#3,#4){}}
\newcommand{\Hom}[3]{\operatorname{Hom}_{#1}(#2,#3)}
\newcommand{\Tor}[4]{\operatorname{Tor}_{#1}^{#2}(#3,#4){}}
\newcommand{\topp}[3]{t_{#1}^{#2}(#3){}}

\newcommand{\ul}{\underline}
\newcommand{\ve}{{\varepsilon}}

\newcommand{\xra}{\xrightarrow}

\newcommand{\BC}{{\mathbb C}}
\newcommand{\BN}{{\mathbb N}}

\newcommand{\BZ}{{\mathbb Z}}

\newcommand{\bs}{\boldsymbol}

\newcommand{\fm}{{\mathfrak m}}
\newcommand{\fg}{\mathfrak{g}}
\newcommand{\fgl}{\mathfrak{gl}}
\newcommand{\fsl}{\mathfrak{sl}}
\newcommand{\fso}{\mathfrak{so}}
\newcommand{\fsp}{\mathfrak{sp}}


%
%
%
%
%
%
%


\newtheorem{theorem}[subsection]{Theorem}

\newtheorem{proposition}[subsection]{Proposition}
\newtheorem{lemma}[subsection]{Lemma}

\theoremstyle{definition}

\newtheorem*{ack}{Acknowledgements}

\theoremstyle{remark}
\newtheorem{remark}[subsection]{Remark}
\newtheorem*{Claim}{Claim}

\setcounter{secnumdepth}{2}

\numberwithin{equation}{subsection}

\begin{document}

\title[Koszul properties of moment maps] {Koszul properties of  the moment map of\\some classical representations}

\author[A.~Conca]{Aldo~Conca}
\address{Dipartimento di Matematica, 
Universit\'a di Genova, Via Dodecaneso 35, 
I-16146 Genova, Italy}
\email{conca@dima.unige.it}

\author[H.-C.~Herbig]{Hans-Christian Herbig}
\address{Departamento de Matem\'atica Aplicada, Universidade Federal do Rio de Janeiro (UFRJ), Av. Athos da Silveira Ramos 149, Centro de
Tecnologia - Bloco C, CEP: 21941-909 - Rio de Janeiro, Brazil}
\email{herbig@labma.ufrj.br}

\author[S.~B.~Iyengar]{Srikanth B.~Iyengar}
\address{Department of Mathematics, University of Utah, Salt Lake City, UT 84112, U.S.A.}
\email{iyengar@math.utah.edu}
\date{\today}

\keywords{Betti number, classical Lie algebra, Koszul algebra, moment map, Poincar\'e series, standard representation}

\subjclass[2010]{13D02, 16S37, 53D20}

\date{\today}
 
\begin{abstract}
This work concerns the moment map $\mu$ associated with the standard representation of a classical Lie algebra. For applications to deformation quantization it is desirable that $S/(\mu)$, the coordinate algebra of the zero fibre of $\mu$, be Koszul. The main result is that  this algebra is not Koszul for the standard representation of $\mathfrak{sl}_{n}$, and of $\mathfrak{sp}_{n}$. This is deduced from a computation of the Betti numbers of $S/(\mu)$ as an $S$-module, which are of interest also from the point of view of commutative algebra.
\end{abstract}

\maketitle

\section{introduction}
\label{sec:intro}

If one squeezes a nice and regular object into a mold that is too small, the object will crack and fold  and exhibit irregularities. This is precisely what happens when one is  representing a reductive complex Lie algebra $\fg$ on a low dimensional complex vector space $V$.
The irregularities become apparent when studying the \emph{moment map} $\mu\colon V\times V^*\to \fg^*$ of the representation; see Section \ref{se:moment-maps}.  When $\dim(V)$ is small the zero fibre $\mu^{-1}(0)$ cannot have codimension $\dim(\fg)$, which translates to the statement that the moment map cannot be a complete intersection in $V\times V^*$. 

When $V$ is spacious enough to properly host $\fg$, the moment map has a lot of good features. Tools to measure the size of the representation have been developed by  Schwarz \cite{LiftingDOs, DOsonQuotients} in connection with the \emph{modularity}, also known as modality~\cite{Vinberg}. Specifically, the moment map is a complete intersection if and only if the representation is \emph{$0$-modular}. A stronger condition is \emph{$1$-largeness}; then $\mu^{-1}(0)$ is irreducible and the ideal $(\mu)$ generated in $S:=\BC[V\times V^*]$ by the components of $\mu$ is radical. If the representation is even \emph{$2$-large}, then $\mu^{-1}(0)$ is also a normal variety.
When $\fg$ is simple, all but finitely many representations are 2-large; for details, see~\cite{KoszulComplex}.

This work reported in this paper concerns the Koszul property for the algebra $S/(\mu)$ and the following question:

\medskip

\emph{For which representations $\fg:V$ is the algebra $S/(\mu)$ Koszul?}

\medskip

The motivation to study this question comes from the Batalin-Fradkin-Vilkovisky approach to symplectic reduction~\cite{HT}. It has been observed by the second author that the BFV-quantization scheme of \cite{BHP} can be generalized to the situation when  $S/(\mu)$ is a Koszul algebra. The main reason for this is that in the Tate model~\cite{Tate} of $S/(\mu)$ over $S$, the internal degree is linked to homological degree; see \cite{ACI}. In the non-Koszul case however it appears that the BFV-quantization is unfeasible as the degrees proliferate which makes it is impossible to control anomalous terms.

When $\fg:V$  is $0$-modular, the ideal $(\mu)$ is a complete intersection of quadrics and then it is well known that  $S/(\mu)$ is a Koszul algebra; see, for example,~\cite{QuadraticAlgebras}.  So the question is moot only for representations that are not $0$-modular. For any representation $\fg:V$, the ideal $(\mu)$ is homogeneous quadratic, which is a necessary for $S/(\mu)$ to be Koszul. However, this condition is far from sufficient, and detecting when an algebra is Koszul is a notoriously difficult problem.

In this paper we develop methods to test when the algebra $S/(\mu)$ is \emph{not}  Koszul. We focus on the smallest (that is to say, the most `squeezed') nontrivial representations: the standard representations of the classical Lie algebras $\fgl_n,\fsl_n, \fso_n, \fsp_{n}$. Here is summary of our results.

\begin{theorem}
\label{thm:main}
Let $\fg:V$ denote the standard representation of the Lie algebras considered above, and let $R=S/(\mu)$ be the coordinate algebra of the zero fibre. The following statements hold for each $n\ge 1$.
\begin{enumerate}[\quad\rm(1)]
\item
For $\fg=\fgl_n$ the algebra $R$ is Koszul, $\dim R=n$ and $\depth R=1$.
\item
For $\fg=\fso_n$ the algebra $R$ is Koszul and $\dim R = n+1 = \depth R$. 
\item
For $\fg=\fsl_n$ the algebra $R$ is not Koszul, $\dim R=n$ and $\depth R=0$.
\item
For $\fg=\fsp_n$ the algebra $R$ is not Koszul, $\dim R=2n$ and $\depth R=0$.
\end{enumerate}
\end{theorem}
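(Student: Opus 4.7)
The plan is to treat the four Lie algebras separately, since their moment maps have quite different features; for each I must establish Koszul-or-not, dimension, and depth. Dimensions follow from a geometric description of the zero fibre $V(\mu)$, depths come from exhibiting either a regular element of $R$ or a socle element, and the Koszul assertions, as announced in the abstract, are to be extracted from explicit $S$-module Betti numbers of $R$.

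For $\fgl_n$, the moment map sends $(v,w)$ to the outer-product matrix $v\otimes w$, so $(\mu)=(x_iy_j\col 1\le i,j\le n)$ and $R$ is the fibre product $\BC[x]\times_{\BC}\BC[y]$. Fibre products of polynomial rings over a field are Koszul (Backelin--Fr\"oberg), $V(\mu)$ is the union of two $n$-planes through the origin so $\dim R=n$, and although $R$ is not Cohen--Macaulay for $n\ge 2$, the element $x_1+y_1$ is a nonzero-divisor while $R/(x_1+y_1)R$ acquires nonzero socle, giving $\depth R=1$. For $\fso_n$, identifying $V$ with $V^*$ via the invariant symmetric form shows $(\mu)$ to be generated by the $2\times 2$ minors of the generic $2\times n$ matrix with rows $x_\bullet$ and $y_\bullet$; hence $R$ is the classical determinantal ring of the affine cone over $\mathbb{P}^1\times\mathbb{P}^{n-1}$, which is Cohen--Macaulay of dimension $n+1$ and Koszul because these minors form a Gr\"obner basis with squarefree quadratic initial ideal.

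For $\fsl_n$ and $\fsp_n$ we need non-Koszulness. The plan is to determine $H_R(t)$ explicitly and then invoke the fact that if $R$ were Koszul, then $\po[R]{\BC}{t}=1/H_R(-t)$, forcing this formal inverse to have non-negative coefficients; exhibiting a single negative coefficient therefore rules out Koszulness. For $\fsl_n$, the key observation is that the surjection $R\twoheadrightarrow R_{\fgl_n}$ has kernel spanned by the class of $x_1y_1$ (since $x_iy_i\equiv x_jy_j$ modulo $(\mu)$), and a short computation using $x_iy_j\in(\mu)$ for $i\ne j$ shows $\fm\cdot(x_1y_1)=0$. Thus $x_1y_1$ is a socle element, simultaneously giving $\depth R=0$ and, together with $\sqrt{(\mu)}=(x_iy_j\col i,j)$, the equality $\dim R=n$. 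The Hilbert series is then
\[H_R(t)=\frac{2}{(1-t)^n}-1+t^2,\]
and expanding $(1+t)^n\big/\bigl(2-(1-t)(1+t)^{n+1}\bigr)$ produces the desired negative coefficient (verified by the four-term recursion its denominator imposes on the coefficients). A parallel strategy is intended for $\fsp_n$ with $V=\BC^{2n}$ the standard representation: realize $(\mu)$ as an ideal of symmetric quadrics in $x,y$, locate socle elements forcing $\depth R=0$, read off $\dim R=2n$ from the radical, and compute $H_R(t)$ from a minimal $S$-free resolution, after which the same Fr\"oberg test again rules out Koszulness.

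The main obstacle is case (4): unlike $\fsl_n$, the symplectic case has no single obvious socle element playing the role of $x_1y_1$, and the defining ideal carries a rich $\fsp_{2n}$-action tying together many generators. The technical heart of the proof is therefore the computation of the $S$-module Betti numbers of $R$ for $\fsp_n$, presumably organized along the $\fsp_{2n}$-isotypic decomposition of the resolution; once they are in hand, both the Hilbert series and the Fr\"oberg negativity argument fall out.
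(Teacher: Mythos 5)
Your approaches to $\fgl_n$ and $\fso_n$ are correct and close in spirit to the paper's (the paper observes that $(\mu)$ is a quadratic monomial ideal for $\fgl_n$, and uses the Eagon--Northcott resolution rather than a squarefree initial ideal for $\fso_n$, but these are equivalent standard routes). The socle-element argument for $\depth R = 0$ in cases (3) and (4), and the dimension counts via the radical, also match the paper.

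The genuine gap is in the non-Koszulness argument for $\fsl_n$. You propose the Hilbert-series (Fr\"oberg) test: show that $1/H_R(-t)$ has a negative coefficient. This test is indeed sound, and it does detect non-Koszulness for $\fsl_2$ (the coefficient of $t^7$ in the expansion of $(1+t)^2/(1-2t+2t^3+t^4)$ is $-48$). But it fails for $n\ge 3$. For $n=3$ the denominator is $q(t) = 2-(1-t)(1+t)^4 = 1-3t-2t^2+2t^3+3t^4+t^5$; writing $1/q(t)=\sum c_k t^k$ one gets $c_0,\dots,c_{15} = 1,3,11,37,124,414,1380,4598,15317,51021,169947,566075,\ldots$, all positive, and since $q(u)=2-(1-u)(1+u)^4$ has a unique root of minimal modulus, which is real, positive (near $0.3$) and simple, the residue argument shows the $c_k$ are asymptotically positive with the leading coefficient dominating; multiplying by $(1+t)^3$ preserves positivity. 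So $1/H_R(-t)$ has nonnegative coefficients for $n=3$ and the test is silent. (This is also visible in Roos's formula quoted in the paper's appendix for $\fsl_3$: the true Poincar\'e-series denominator $1-3u-2u^2+2u^3+u^4-u^5$ differs from $2-(1-u)(1+u)^4$ only in the top two coefficients, and both generate nonnegative power series.) The paper instead proves $\topp{n+1}{B}{k}=n+2$ by a matric Massey product obstruction (Lemma~\ref{le:massey} and Proposition~\ref{pr:sl-not-koszul}): the bigraded Betti computation of Theorem~\ref{th:sl} produces one-dimensional Koszul homology classes in bidegree $[n+1,1]$ and $[1,n+1]$, which cannot be decomposable as matric Massey products since any such decomposable would need both internal bidegree components at least $2$. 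You need a tool of this strength; the numerical test alone does not suffice.

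For $\fsp_n$ your Fr\"oberg strategy is more plausible (for $n=2$ one finds a negative coefficient in degree $6$), but you leave both the socle computation and the Betti numbers entirely unproven. The paper's route is to show the socle is exactly $(\ul p)(\ul q)$ of rank $2n^2-n$ (Lemma~\ref{le:sp-socle}), deduce that the resolution is linear from the second syzygy onward (Lemma~\ref{le:sp-map}), and then apply the subadditivity obstruction of Remark~\ref{re:notkoszul} via the inequality $\beta^S_2(Z)_4 > \binom{\beta^S_1(Z)}2$, which is a clean closed-form comparison valid for all $n$. You should either adopt this or supply the missing Betti-number derivation and the general negativity estimate your method requires.
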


For $\fgl_n:\BC^{n}$ the ideal $(\mu)$ is generated by quadratic monomials, and this implies the Koszul property. For $\fso_n:\BC^{n}$ for $n\ge 3$ the components of the moment map (also known as angular momentum) are the $2\times 2$ minors of a generic $2\times n$ matrix and it is well-known that the corresponding algebra $S/(\mu)$ is Koszul. These assertions, and the claims about the dimension and depth, are justified in Section~ \ref{se:gl}.

The proofs of the assertion that $S/(\mu)$ is \emph{not} Koszul for $\fsl_n:\BC^n$ and $\fsp_{n}:\BC^{2n}$ for each $n\ge 2$ take up the bulk of this paper and require computing the Betti numbers of the corresponding algebra $S/(\mu)$, viewed as a module over the polynomial ring $S$; see Sections~\ref{se:sl} and \ref{se:sp}. Another crucial input in the proofs are certain obstructions to the Koszul property in terms of Betti tables, described in Section~\ref{se:koszul}. 

The Betti tables of these algebras are also of interest from a purely commutative algebra point of view, for they exhibit intriguing combinatorial patterns; this is discussed in Section~\ref{se:betti} of the Appendix.

There is another family of small representations for which $S/(\mu)$ is known not to be Koszul: the adjoint representations $\fgl_n:\fgl_n$ for $n\ge 3$. Here the zero locus of the moment map is essentially the \emph{commuting variety}, that is to say, the variety of pairs of commuting matrices in $\fgl_n\times \fgl_n$. The first few syzygies have been calculated by F. Hreinsdottir \cite{Hreinsdottir} and it follows from Remark \ref{re:notkoszul} that the corresponding algebra is not Koszul. A similar consideration also rules out $\fso_4:2k^4$ and $\fsl_3:2k^3$, for their Betti tables can be computed using, for example, Macaulay 2.

\section{Moment maps}
\label{se:moment-maps}

Let $\fg$ be a finite dimensional reductive Lie algebra over the field $k$ of characteristic zero and $\fg\to \fgl(V)$, $\xi\mapsto  (v\mapsto \xi.v)$ a finite dimensional representation of $\fg$; we write $\fg: V$ to indicate this situation. The \emph{moment map} associated to this representation is the bilinear form
\begin{alignat*}{2}
&\mu \colon V\times V^*\lra \fg^*& \quad &\text{given by}\\
&\mu (v,\alpha)(\xi) := \langle \alpha, \xi.v\rangle& \quad &\text{for $v\in V$, $\alpha\in V^*$ and $\xi\in \fg$.}
\end{alignat*}
Here $\langle \:, \:\rangle$ denotes the dual pairing between $V^*$ and $V$. Its dual is the map
\begin{align*}
&\fg\lra V\times V^* \quad  \text{with $\xi\mapsto \mu_\xi$, where}\\
&\mu_{\xi}(v,\alpha):=\langle \alpha, \xi.v\rangle\,.
\end{align*}
The map $\xi\mapsto \mu_\xi$  extends to a Lie algebra homomorphism $\fg\to S:=k[V\times V^*]$ when $S$ is equipped with the unique Poisson bracket $\{\:,\:\}$ such that 
\[
\{v,\alpha\}:=\langle \alpha,v\rangle\quad\text{and}\quad \{v,v'\}:=0=:\{\alpha,\alpha'\}
\]
for all $v,v'\in V$ and $\alpha,\alpha'\in V^*$. We are interested in the ideal of $S$ generated by the components of the moment map:
\[
 I:=(\mu)=\{\mu_\xi\mid \xi \in\fg\}\,.
 \]
This is a subalgebra with respect to the Poisson bracket $\{\:,\:\}$, but typically not an ideal with respect to the Poisson bracket. Since the $\mu_{\xi}$ are quadratic forms, $I$ is homogenous, when $S$ is viewed as a graded ring with $S_{1}=V\times V^{*}$; in fact, there is a natural $\BN^2$-grading on $S$, and $I$ is homogenous also with respect to this finer grading; this will prove to be useful in what follows.

Let us explain the coordinates expressions used for $V^*\times V$, $\fg$, $S$ and $\mu$.

\subsection{Standard representation of $\fgl_{n}$}
\label{mm:gl}
The Lie algebra $\fgl_{n}$ is the space of $n\times n$ matrices over $k$, with bracket $[A,B]=AB-BA$. Let $\fgl_n:k^n$ be the representation given by the obvious action of $\fgl_{n}$ on $k^{n}$. 

With respect to the canonical bases $e_1,e_2,\dots,e_n$ for $V=k^n$ and its dual basis $f_1,f_2,\dots f_n$ a point in $V\times V^*$ is identified with its linear coordinates 
\[
(p_1,p_2,\dots,p_n,q_1,q_2,\dots,q_n)=\sum q_i e_i+ p_i f_i\,.
\]
Accordingly we identify $k[V\times V^*]$ with
\[
S:=k
\bmatrix 
p_{1} & \dots & p_{n} \\
q_{1} & \dots & q_{n} 
\endbmatrix .
\]
For $i,j\in\{1,2,\dots n\}$, let $E_{ij}$ denote the $n\times n$-matrix with $1$ in the $i$th row and $j$th column, and zero elsewhere. These matrices form a basis for $\fgl_n$, and it is not hard to verify that the components of the moment map are $p_i q_j$ for $i,j=1,2,\dots,n$.

\subsection{Standard representation of $\fsl_{n}$}
\label{mm:sl}
The Lie algebra $\fsl_{n}$ is the Lie subalgebra of $\fgl_{n}$ consisting of matrices of trace zero. The representation $\fgl_{n}:k^{n}$ from \ref{mm:gl} thus restricts to one of $\fsl_{n}$. A basis for $\fsl_{n}$ is given by $E_{i,j}$ where $1\le i,j\le n$ with $i\ne j$,  and $E_{i,i}-E_{i+1,i+1}$ for $i=1,2\dots, n-1$. Hence the components of the moment map are $p_i q_j$ for ($1\le i,j\le n$ with $i\ne j$) and $p_i q_i-p_{i+1}q_{i+1}$ for $(i=1,2\dots, n-1)$.

\subsection{Standard representation of $\fso_{n}$}
\label{mm:so}
Recall that $\fso_{n}$ is the Lie subalgebra of $\fgl_{n}$ consisting of anti-symmetric matrices. The matrices $E_{i,j}-E_{j,i}$  where $1\le i<j\le n$ are a basis for $\fso_{n}$. Restricting the standard representation of $\fgl_{n}$ to $\fso_{n}$ yields a moment map with components $p_iq_j-p_j q_i$ $(1\le i<j\le n)$, that is to say, the $2\times 2$-minors of the generic   $2\times n$-matrix.

\subsection{Standard representation of $\fsp_{n}$}
\label{mm:sp}
The Lie algebra $\fsp_{n}$ consists of $2n\times 2n$-matrices $A$ such that
\[
A^{t}S=SA \quad \text{where $S=\begin{bmatrix} 0 & I_{n} \\ -I_{n} & 0\end{bmatrix}$}
\]
Here $I_{n}$ is the $n\times n$ identity matrix. A basis for $\fsp_n$ is given by the $2n\times 2n$ matrices
\begin{alignat*}{2}
&E_{i,j}-E_{n+j,n+i} &  & \text{ for } i,j=1,\dots, n \\
&E_{i,n+i} \text{ and } E_{n+i,i}&           &\text{ for } i=1,\dots, n \\
&E_{i,n+j}+E_{j,n+i} \text{ and } E_{n+i,j}+E_{n+j,i}& \qquad  &\text{ for }1\le i<j\le n
\end{alignat*}

We use $(p_{11},\dots ,p_{1n},p_{21},\dots ,p_{2n},q_{11},\dots ,q_{1n},q_{21},\dots ,q_{2n})$ for linear coordinates for $V^*\times V$ and identify $k[V\times V^*]$ with
\[
S:=k
\bmatrix 
p_{11} & \dots & p_{1n} & p_{21} & \dots &p_{2n} \\
q_{11} & \dots & q_{1n} & q_{21} & \dots &q_{2n} 
\endbmatrix
\]
The corresponding components of the moment map are
\begin{alignat*}{2}
&p_{1i}q_{1j}-p_{2j}q_{1i}  &  & \text{ for } i,j=1,\dots, n \\ 
&p_{1i}q_{2i} \text{ and } p_{2i}q_{1i}&      &\text{ for } i=1,\dots, n \\  
&p_{1i}q_{2j}+p_{1j}q_{2i} \text{ and }  p_{2i}q_{1j}+p_{2j}q_{1i} & \qquad &\text{ for }1\le i<j\le n
\end{alignat*}

\section{Koszul algebras}
\label{se:koszul}
In this section we recall the definition of certain invariants of modules over graded rings. The focus will be on Koszul algebras and some obstructions to this property.

\subsection{Fine grading}
The algebras and modules that we study will be equipped with a $\BN^{2}$-grading and the arguments exploit this structure. We prepare for this by introducing  notation concerning $\BN^{c}$-graded objects, for a positive integer $c\ge 1$.

Let $X$ be an $\BN^{c}$-graded set. When we speak of an element $x$ in $X$, it should be understood that $x$ is homogeneous: $x\in X_{v}$ for some $v\in \BN^{c}$. Then $v$ is the \emph{degree} of $x$, that we denote $\deg x$. For any $v$ in $\BN^{c}$ set
\[
\|v\|:=\sum_{i=1}^{c}v_{i}\quad\text{where $v=(v_{1},\dotsc,v_{c})$.}
\]
This is the \emph{total degree} of $v$. When $k$ is a field $X$ is a $\BN^{c}$-graded $k$-vector space, set
\[
\hilb X{s_{1},\dots,s_{c}}:= \sum_{v\in \BN^{c}}\rank_{k}(X_{v})s_{1}^{v_{1}}\cdots s_{c}^{v_{c}}\,,
\]
the \emph{Hilbert series} of $X$, viewed as an formal power series over $\BZ$ in indeterminates $s_{1},\dots,s_{c}$. We will also need the version based on total degrees
\[
\hilb X{s}:= \sum_{j\in \BN}\rank_{k} (X_{j})s^{j} = \hilb X{s,\dots,s} \,.
\]

\subsection{Graded rings}
Let $k$ be a field (of arbitrary characteristic) and $R$ a standard $\BN^{c}$-graded $k$-algebra, meaning:
\begin{enumerate}
\item
$R^{0}=k$;
\item
$R$ is an $\BN^{c}$-graded commutative $k$-algebra;
\item
$\rank_{k}(\oplus_{\|v\|=1} R_{v})$ is finite
\item
$R$ is generated by elements in $R_{v}$ with $\|v\|=1$.
\end{enumerate}
These conditions imply $R$ is noetherian and that $\rank_{k}R_{v}$ is finite for each $v\in \BN^{c}$. This ring is also local, in the graded sense: The set $\{r\in R\mid \|\deg(r)\|\ge 1\}$ is the unique homogeneous maximal ideal of $R$.

Any finitely generated $\BN^{c}$-graded $R$-module $M$ is noetherian, and hence $\rank_{k}M_{v}$ is finite for each $v\in \BN^{c}$.
The \emph{socle} of $M$ is the $k$-vector subspace
\[
\{m\in M \mid r\cdot m =0 \text{ for all $r\in R$ with $\|r\|\ge 1$}\}.
\]

Let $M,N$ be $\BN^{c}$-graded $R$-modules. Each $\Tor iRMN$ is  endowed with a $\BN^{c}$-grading compatible with the action of $R$. Thus $\Tor{}RMN$ is $(\BN\times\BN^{c})$-graded. We write $|{\,}|$ for the homological grading.  For example, for any $a$ in $\Tor{}RMN$
\[
|a| = i \quad\text{and}\quad \deg(a) = v\text{ means } a \in \Tor iRMN_{v}
\]
where $v\in\BN^{c}$. Then $\|\deg(a)\|$ is the total (internal) degree of $a$. 

\subsection{Betti numbers}
\label{def:betti}
Let $M$ be a finitely generated $\BN^{c}$-graded $R$-module. The  \emph{graded Betti numbers} of $M$ are the integers
\[
\betti {i,v}RM:= \rank_{k}\Tor iRkM_{v}\qquad \text{for $i\in \BN$ and  $v\in \BN^{c}$}
\]
and the associated generating function
\[
\po [R]M{s_{1},\dots,s_{c},u} := \sum_{i,v}\betti {i,v}RM s_{1}^{v_{1}}\cdots s_{c}^{v_{c}} u^{i}\,,
\]
viewed as an element in $\BZ[s_{1},\dots,s_{c}][|u|]$, is the \emph{graded Poincar\'e series} of $M$. Often, one is interested only in homological degrees and the associated Poincar\'e series:
\[
\betti iRM:= \sum_{v\in\BN^{c}} \betti {i,v}RM \qquad\text{and} \qquad \po [R]M{u}:= \sum_{i,j}\betti iRMu^{i}
\]
In what follows, we often compute the Poincar\'e series of $M$ as the Hilbert series of $\Ext {}RMk$. The natural grading on this $k$-vector space is cohomological and so, when dealing with them, we will tacitly switch to this one, by setting
\[
\Ext iRMk^{v}:=\Ext iRMk_{-v}\,,
\]
for any finitely generated $\BN^{c}$-graded $R$-module $M$.

We will also need to consider the following integers:
\[
\topp iRM:= \sup\{\|v\| \in\BN \mid \Tor iRkM_{v}\ne 0\} \qquad \text{for each $i\in \BN$.}
\]
This is the highest total degree of a Betti number of $M$ in homological degree $i$.

\subsection{Koszul algebras}
\label{def:koszul}
When $R$ is regular (that is to say, isomorphic to the symmetric $k$-algebra on $R_{1}$) one has $\topp iRk=i$ for  $0\le i\le \dim R$.  Even when $R$ is singular (meaning, not regular) it is not hard to see that 
\[
\topp iRk\ge i\qquad \text{for $i\ge 0$.}
\]
When equality holds for each $i$ one says that the $k$-algebra $R$ is \emph{Koszul}.  In this case, it is not hard to verify that there is an equality of formal power series
\[
\po [R]k{-u}\hilb R{u} = 1\,.
\]
It is known that the converse also holds; see, for example,~\cite[Theorem 1]{Fr}.

When $R$ is Koszul, the relations defining $R$ must be quadratic; in other words, there is an isomorphism of $k$-algebra $R\cong S/I$, where $S$ is a standard graded polynomial ring over $k$ and $I$ can be generated by quadratic forms. The converse holds if $I$ is monomial---see, for example, \cite[Theorem~15]{CDR}---but not in general.

\begin{remark}
\label{re:topR-k}
If for some integer $s\ge 0$ one has $\topp iSR\le i+1$ for each $i\le s$, then 
\[
\topp iRk\le i\quad\text{for $i\le s+1$.}
\]
Moreover, if $s=\pd_{S}R$, the projective dimension of $R$ over $S$, then $R$ is Koszul.

These results are immediate from the graded version of a coefficientwise inequality \cite[Proposition~3.3.2]{Av:barca} due to Serre, relating the Poincar\'e series of $k$ over $R$, and of $k$ and $R$ over $S$.
\end{remark}

The next result is sometimes helpful is detecting when a $k$-algebra is \emph{not} Koszul.

\begin{remark}
\label{re:notkoszul}
If $R$ is Koszul, then $\beta^{S}_{1}(R)=\beta^{S}_{1}(R)_{2}$ and 
\[
\beta^{S}_i(R)_{2i}\le \binom{\beta^{S}_{1}(R)}i \quad \text{for each integer $i\ge 1$.}
\]
This inequality is due to Avramov, Conca, and Iyengar~\cite[Theorem~5.1]{ACI2}.
\end{remark}

A different obstruction to the Koszul property involves Massey products on the Koszul homology of $R$. 

\subsection{Matric Massey products}
Let $R$ be a standard $\BN^{c}$-graded $k$-algebra. Let $K$ be the Koszul complex of $R$ and set $H:=\hh K$.  The canonical DG (= Differential Graded) $R$-algebra structure on $K$ is  compatible with the $\BN^{c}$-grading. More precisely, for elements $a,b$ in $K$ one has
\begin{alignat*}{2}
&|ab| = |a| + |b|\,& \quad &\text{and}\quad \deg(ab) = \deg(a) + \deg(b) \\
&|d(a)| = |a|-1\,& \quad &\text{and}\quad \deg(d(a)) = \deg(a)\,.
\end{alignat*}

We recall the notion of Massey products on $H$, for which Kraines~\cite{Kr} is a good reference, keeping track of the finer grading involved. To begin with, for any element $a$ in $K$, one sets
\[
\ov a :=   (-1)^{|a|+1}a\,.
\]
Let $w_{1},\dots,w_{s}$ be elements in $H_{\ges 1}$; as always, these are assumed to be homogenous with respect to the $(\BN\times\BN^{c})$-grading on $H$.  The Massey product $\langle w_{1},\dots,w_{s}\rangle$ is said to be \emph{defined} if there is a collection of elements $a_{ij}$ in $K$, for $1\le i < j\le s$ and $(i,j)\ne (1,s)$ such that, for each $i$ the element $a_{ii}$ is cycle representing $w_{i}$ and for each pair $(i,j)$ as above, one has
\begin{equation}
\label{eq:mm-input}
d(a_{ij}) = \sum_{r=i}^{j-1} \ov{a}_{ir}a_{r+1,j}\,.
\end{equation}
Such a collection $\{a_{ij}\}$ of elements is a \emph{defining system} for  $\langle w_{1},\dots,w_{s}\rangle$. Given a defining system, it is readily verified that the element 
\begin{equation}
\label{eq:mm-output}
a_{1s}:=\sum_{r=1}^{s-1}\ov{a}_{1r}a_{r+1,s}
\end{equation}
is a cycle in $K$. The Massey product $\langle w_{1},\dots,w_{s}\rangle$, when it is defined, is the collection of all elements in $H$ that can be represented by cycles of the form $a_{1s}$ for some defining system $\{a_{ij}\}$ as above.  Observe that these lie in $H_{p,v}$ where 
\[
p=\sum_{r=1}^{s}|w_{i}| + (s-2)\quad\text{and}\quad v=\sum_{r=1}^{s}\deg(w_{i})\,.
\]

We need an extension of all this to matrices with coefficients in $H$. These \emph{matric Massey products} where introduced by May~\cite{May} in the context of general DG algebras and brought to bear to study commutative rings (specifically, Koszul homology) by Avramov~\cite{Av}. In this context, one considers matrices $W_{1},\dots,W_{s}$ with entries in $H$ with appropriate restrictions on their sizes and the degrees (both homological and internal) of the entries in them to ensure that the products involved in \eqref{eq:mm-input} and \eqref{eq:mm-output} are permissible; each $a_{ij}$ is replaced by a matrix $A_{ij}$ with entries in $K$. 

Elements of the matric Massey product $\langle W_{1},\dots, W_{s}\rangle$, when it is defined, are matrices with entries in $H_{\ges 1}$.  
For any matrix $W$ in  $\langle W_{1},\dots, W_{s}\rangle$, the degrees of $W_{i,j}$, the entry in position $(i,j)$ of $W$, are given by 
  \begin{align*}
|W_{ij}| &= \sum_{r=1}^{s}|(W_{r})_{i_{r-1}i_{r}}| + (s-2) \\
\deg(W_{ij}) &=  \sum_{r=1}^{s} \deg((W_{r})_{i_{r-1}i_{r}})
\end{align*}
where $i_{0}=i$ and $i_{r}=s$, and $i_{1},\dots,i_{s-1}$ are any permissible choice of integers; for example, $1,\dots,1$.

An element $w$ in $H$ is \emph{decomposable} as a matric Massey product if it belongs to $\langle W_{1},\cdots,W_{s}\rangle $ for 
some  system of matrices with entries in $H_{\ges 1}$; necessarily, $W_{1}$ is then a row matrix and $W_{s}$ is a column matrix. 

\begin{lemma}
\label{le:massey}
Let $R$ be a standard $\BN^{c}$-graded $k$-algebra and $H$ its Koszul homology. If $s$ is an integer such that $\topp iRk\le i$ for each $i\le s$, then the elements of $H_{i,v}$ are decomposable as matric Massey products when $2\le i \le s-1$ and $\|\deg v\|\ge i+2$.
\end{lemma}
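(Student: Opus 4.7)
The strategy is to work with the minimal Tate DG algebra resolution of $k$ over $R$: a DG algebra $A=R\langle T_{1},T_{2},\dots\rangle$ obtained from $R$ by iteratively adjoining free variables $T_{j}$ in positive homological degrees so as to kill cycles, yielding a quasi-isomorphism $A\to k$ for which the induced differential on $A\otimes_{R}k$ vanishes (minimality). This identifies $\Tor iRkk$ with the $k$-span of the pure monomials of total homological degree $i$ in the Tate variables, where the Koszul generators $e_{1},\dots,e_{n}$ of $K$ are counted among the $T$'s of homological degree $1$.

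The first step is to unpack the hypothesis. Each Tate variable $T$ satisfies $\|\deg T\|\ge|T|$, so every pure monomial of homological degree $n$ contributes to $\betti nRk$ in total internal degree at least $n$. The condition $\topp iRk\le i$ for $i\le s$ forces equality throughout: every Tate variable with $|T|\le s$ satisfies $\|\deg T\|=|T|$, so through homological degree $s$ the Tate model is internally linear. Now take a cycle $z\in Z_{i}(K)_{v}$ representing the class $w\in H_{i,v}$ with $2\le i\le s-1$ and $\|v\|\ge i+2$. Since $A$ is acyclic in positive homological degrees there exists $a\in A_{i+1}$ with $d(a)=z$ and $\deg a=v$; note $i+1\le s$. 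Writing $a=\sum_{\alpha}r_{\alpha}m_{\alpha}$ with $r_{\alpha}\in R$ and $m_{\alpha}$ a pure Tate monomial of homological degree $i+1$, the linearity just noted yields $\|\deg m_{\alpha}\|=i+1$, so $\|\deg r_{\alpha}\|\ge 1$ for every nonzero summand. Consequently $a$ must have coefficients in the graded maximal ideal of $R$ and involve at least two Tate variables in each of its monomials.

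The final step is to convert the relation $d(a)=z$ into a matric Massey product presentation of $w$. Each Tate variable $T_{j}$ appearing in $a$ satisfies $d(T_{j})=\zeta_{j}$ for some cycle at a lower Tate stage; pulling $\zeta_{j}$ back inductively into $K$ yields a Koszul homology class that becomes an entry of one of the matrices $W_{r}$. Expanding $d(a)=z$ via the Leibniz rule and collecting the resulting terms along a chosen linear order of the Tate variables in the support of $a$ produces matrices $W_{1},\dots,W_{s'}$ and intermediate chains $A_{ij}$ satisfying~\eqref{eq:mm-input}, whose output cycle~\eqref{eq:mm-output} represents $w$. This exhibits $w\in\langle W_{1},\dots,W_{s'}\rangle$ and proves decomposability.

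The principal obstacle is the last step: the bare boundary relation $d(a)=z$ must be combed into the prescribed upper-triangular shape of a matric Massey defining system. I would handle this by an induction on the number of distinct Tate variables in the support of $a$, using the internal linearity from the first step to ensure that at every intermediate stage the cycles $\zeta_{j}$ can be lifted back into $K$ with controlled internal degree, so that the resulting defining system lands in $K$ (not merely in $A$). The matrix formulation is essential: a single Tate variable of homological degree greater than $1$ typically kills an entire vector of cycles at its stage rather than a single cycle, and this is precisely what forces the passage from ordinary to matric Massey products.
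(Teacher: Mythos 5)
Your setup is sound and tracks closely with the structure underlying the paper's argument. Working in the minimal Tate model $A=R\langle T_{1},T_{2},\dots\rangle$, you correctly observe that the hypothesis $\topp iRk\le i$ for $i\le s$ forces $\|\deg T\|=|T|$ for every Tate variable with $|T|\le s$, so that for a cycle $z\in Z_{i}(K)_{v}$ with $2\le i\le s-1$ and $\|v\|\ge i+2$, any $a\in A_{i+1}$ with $d(a)=z$ and $\deg a = v$ necessarily lies in $\fm A_{i+1}$. This is exactly the statement that the suspension $\sigma\colon H_{\ges 1}\to \Tor{}Rkk/(\Tor 1Rkk)$ annihilates the class of $z$; the paper reaches the same point via the vanishing $T_{i+1,v}=0$ and \cite[Definition~3.7]{GM}.

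The gap is in your final step. The passage from ``$z$ bounds in $\fm A_{i+1}$'' to ``$[z]$ is decomposable as a matric Massey product in $H_{\ges 1}$'' is precisely the theorem of Gugenheim and May that $\Ker(\sigma)$ coincides with the set of matric Massey decomposables, which the paper invokes as \cite[Corollary~5.13]{GM}. Your sketch of ``combing'' $d(a)=z$ into a defining system by induction on the number of distinct Tate variables in $a$ describes what that theorem accomplishes but does not prove it: for $|T_{j}|\ge 3$ the cycle $d(T_{j})$ lives at an earlier Tate stage rather than in $K$, and reducing to matrices $W_{r}$ with entries in $H_{\ges 1}$ and chains $A_{ij}$ in $K$ satisfying \eqref{eq:mm-input} is the genuine content of \cite[Corollary~5.13]{GM}. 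There is also an unjustified intermediate claim: that each monomial of $a$ must ``involve at least two Tate variables.'' It need not --- a term $r_{\alpha}T_{j}$ with $r_{\alpha}\in\fm$ and $T_{j}$ a single Tate variable of homological degree $i+1$ is fully compatible with your degree count; what you have actually extracted is $r_{\alpha}\in\fm$, not a lower bound on the number of Tate factors. Either supply a proof of the Gugenheim--May suspension theorem adapted to the graded setting, or cite it as the paper does.
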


\begin{proof}
In what follows, we  set
\[
T:=\Tor{}Rkk/(\Tor 1Rkk)\,.
\]
This inherits from $\Tor{}Rkk$ a structure of a ($\BN\times\BN^{c}$)-graded $k$-algebra, and our hypothesis translates to $T_{i,v}=0$ when $i\le s$ and $\|\deg v\|\ge i+1$. The crux of the argument is that there is a $k$-linear map $\sigma \colon H_{\ges 1} \lra T$ of homological degree one, called the suspension; see, for example, \cite[Definition 3.7]{GM}. This map respects the internal gradings, that is to say, for each $i\ge 1$ and $v\in\BN$ one has
\[
\sigma_{i,v}\colon H_{i,v}\lra T_{i+1,v}\,.
\]
Thus, the hypothesis on then $\topp iRk$ entails $T_{i+1,v}=0$ whenever $1\le i\le s-1$ and $\|\deg v\|\ge i+2$. Therefore for these $i,v$ one has $H_{i,v}\subseteq \Ker(\sigma)$. It remains to note that $\Ker(\sigma)$ consists precisely of elements decomposable as matric Massey products in $H_{\ges 1}$; see \cite[Corollary 5.13]{GM}.
\end{proof}

\section{The general linear and the special orthogonal case}
\label{se:gl}
In this section we describe the Betti numbers of the moment maps associated to canonical representation of the general linear, and the special orthogonal, Lie algebra.  The moment maps in question are described in \ref{mm:gl} and \ref{mm:so}. These results are special cases of general results in commutative algebra. The main reason for describing the computations carefully is that they get used in the later sections.

\subsection{The general linear group}
\label{ss:gln}
Let $k$ be a field and $S$ the polynomial ring over $k$ in indeterminates $\ul p:=p_{1},\dots,p_{n}$ and $\ul q :=q_{1},\dots,q_{n}$. Thus
\[
S:=k
\bmatrix 
p_{1} & \dots & p_{n} \\
q_{1} & \dots & q_{n} 
\endbmatrix
\]
We view $S$ as an $\BN^{2}$-graded $k$-algebra with each $p_{i}$ of degree $[1,0]$ and each $q_{i}$ of degree $[0,1]$.  Set
\[
I: = (\ul p )\cap (\ul q) \quad\text{and}\quad A = S/I\,.
\]
Since $I$ is generated by quadratic monomials, the $k$-algebra $A$ is Koszul; see Definition~\ref{def:koszul}. Next we compute the Betti numbers of $A$ as an $S$-module.

As $I$ is  homogenous, the $k$-algebra $A$  inherits the  $\BN^{2}$-grading of $S$.  Noting that $(\ul p) + (\ul q)$ is the maximal ideal of $S$, the standard Mayer-Vietoris sequence for the ideals $(\ul p)$ and $(\ul q)$ reads
\begin{equation}
\label{eq:mvs}
0\lra A \xra{\ \iota\ }C \xra{\ \pi\ } k \lra 0\,, \quad\text{where $C= S/(\ul p) \bigoplus S/(\ul q)$.}
\end{equation}
From this it follows that the bigraded Hilbert series of $A$ is
\begin{equation}
\label{eq:hs-A}
\hilb{A}{s,t} = \frac 1{(1-s)^{n}} + \frac 1{(1-t)^{n}} - 1 
\end{equation}

Next we compute the Poincar\'e series of $A$, viewed as an $S$-module. 

\subsection{Structure of $\Ext{}SAk$ as module over $\Ext{}Skk$}
Set $\fm = (\ul p,\ul q)$; this is the homogenous maximal ideal of $S$. The residue classes modulo $\fm^{2}$ of the sequence $\ul p,\ul q$ is a basis for the $k$-vector space $\fm/\fm^{2}$. Let $e_{1},\dots,e_{n},f_{1},\dots,f_{n}$ be the dual basis, of the $k$-vector space
\[
V:=\Hom k{\fm/\fm^{2}}k\,.
\]
We view this as a $\BN^{2}$-graded $k$-vector space with (upper grading) $\deg(e_{i})=[1,0]$ and $\deg(f_{i})=[0,1]$, in cohomological degree one. Then one has
\[
\Lambda:= \Ext{}Skk \cong \wedge V\quad\text{and}\quad \Ext{}SCk\cong \Lambda/(\ul f) \oplus \Lambda/(\ul e)\,,
\]
and the map $\Ext{}S{\pi}k$, with $\pi$ as in \eqref{eq:mvs}, is  the canonical map
\[
\Lambda \lra \Lambda/(\ul f) \oplus \Lambda/(\ul e)\,.
\]
It is easy to check that this map is surjective in homological degrees $\ge 1$; for example,  in degree one the map is an isomorphism and, as $\Lambda$-modules, the source and target are generated by their components in homological degree zero. It follows that  \eqref{eq:mvs} induces an exact sequence of graded $k$-vector spaces 
\begin{equation}
\label{eq:mvs-ext}
0\lra  \Ext{\ges 1}SAk \lra \Ext{\ges 2}{S}kk [0,0,1] \lra \Ext{\ges 2}SCk [0,0,1] \lra 0\,.
\end{equation}
Noting that $\Ext{}Skk$ is an exterior algebra on $2n$ generators, half of which have (upper) degree $(1,0,1)$ and  the rest have degree $(0,1,1)$. Thus one gets
\begin{equation}
\label{eq:mvs-po}
\po k{s,t,u}:= (1+su)^{n}(1+tu)^{n} \quad\text{and}\quad \po C{s,t,u}:= (1+su)^{n} + (1+tu)^{n}
\end{equation}
This discussion leads to the following result. 

\begin{theorem}
\label{th:gl}
Let $k$ be a field and let $S$ and $A$ be the $k$-algebras introduced in \ref{ss:gln}.
The trigraded Poincar\'e series of $A$ over $S$ is 
\[
\po A{s,t,u}:= 1 + u^{-1}((1+su)^{n}-1)((1+tu)^{n} - 1)\,.
\]
The nonzero graded Betti numbers of $A$ over $S$ are 
\[
\beta^{S}_{i,v}(A) = \begin{cases} 
1 & \text{for $i=0$ and $v=[0,0]$} \\
\binom n{v_{1}} \binom n{v_{2}}  \quad & \text{for $i\ge 1$, $v_{i}\ge 1$, and $v_{1}+v_{2}=i+1$}  
\end{cases}
\]
The Poincar\'e series is $1+u^{-1}((1+u)^{n}-1)^{2}$. The Betti numbers of $A$ over $S$ are
\[
\beta^{S}_{i}(A) = \begin{cases} 
1 & \text{for $i=0$} \\
\binom {2n}{i+1} - 2\binom{n}{i+1} \quad &\text{for $i\ge 1$}\,. 
\end{cases} 
\]
Moreover, $\dim A = n$ and $\depth A=1$. 
\end{theorem}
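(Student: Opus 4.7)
The plan is to extract everything from the exact sequence \eqref{eq:mvs-ext} already set up in the discussion above, combined with the explicit formulas \eqref{eq:mvs-po} for the trigraded Poincar\'e series of $k$ and of $C$ over $S$. Translating \eqref{eq:mvs-ext} into an identity of formal power series (the shift $[0,0,1]$ accounts for the $u^{-1}$ factor) yields
\[
\po A{s,t,u} - 1 = u^{-1}\bigl(\po k{s,t,u} - 1 - n(s+t)u\bigr) - u^{-1}\bigl(\po C{s,t,u} - 2 - n(s+t)u\bigr).
\]
The key cancellation is that the degree-one contributions $n(s+t)u$ in $\po k$ and in $\po C$ agree, so substituting \eqref{eq:mvs-po} collapses the right-hand side to $u^{-1}((1+su)^n - 1)((1+tu)^n - 1)$, which is the claimed trigraded series.

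The formula for the graded Betti numbers is then read off by expanding the two factors binomially and tracking the coefficient of $s^{v_1} t^{v_2} u^i$; the $u^{-1}$ shift is exactly what produces the relation $v_1 + v_2 = i + 1$, and the condition $v_1, v_2 \ges 1$ comes from the subtraction of the constants in each factor. Specializing $s = t = 1$ gives the single-variable Poincar\'e series, and the identity $((1+u)^n - 1)^2 = (1+u)^{2n} - 2(1+u)^n + 1$ yields the formula $\beta_i^S(A) = \binom{2n}{i+1} - 2\binom{n}{i+1}$ for $i \ge 1$.

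For the last two claims, the primary decomposition $I = (\ul p) \cap (\ul q)$ exhibits $\Spec A$ as the union of the two coordinate subspaces $V(\ul p)$ and $V(\ul q)$ of $\Spec S$, each of dimension $n$, so $\dim A = n$. For the depth, the Betti-number formula shows $\beta_i^S(A) \ne 0$ precisely for $0 \le i \le 2n - 1$ (since $\binom{n}{i+1}$ vanishes once $i \ge n$ while $\binom{2n}{i+1}$ is nonzero up to $i = 2n - 1$), so $\pd_S A = 2n - 1$, and the Auslander--Buchsbaum formula then gives $\depth A = 2n - (2n-1) = 1$. No real obstacle is anticipated: once the Mayer--Vietoris sequence \eqref{eq:mvs} and its consequence \eqref{eq:mvs-ext} are in place, the entire theorem is bookkeeping with binomial coefficients.
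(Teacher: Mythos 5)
Your derivation of the trigraded Poincar\'e series and of the graded and ungraded Betti numbers is exactly the bookkeeping the paper leaves implicit when it says the claims are ``immediate from \eqref{eq:mvs-ext} and \eqref{eq:mvs-po}''; the cancellation of the degree-one terms and the resulting factorization $u^{-1}((1+su)^n-1)((1+tu)^n-1)$ is precisely the intended computation, and your reading-off of the graded Betti numbers by expanding the two factors is correct.

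For dimension and depth you deviate slightly from the paper, which cites the Mayer--Vietoris sequence \eqref{eq:mvs} directly (from $0\to A\to C\to k\to 0$ one gets $\depth A\geq\min(\depth C,\,\depth k+1)=1$, and $\depth A\leq 1$ because $\depth k=0\geq\min(\depth A-1,\depth C)$). Your route via primary decomposition for the dimension and Auslander--Buchsbaum for the depth is equally valid; the one place your write-up is a bit quick is in asserting $\beta^S_i(A)\neq 0$ for all $1\leq i\leq 2n-1$. The phrase ``$\binom{2n}{i+1}$ is nonzero up to $i=2n-1$'' by itself does not rule out cancellation with $2\binom n{i+1}$ for $1\le i\le n-1$. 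The cleanest fix is to use the graded Betti numbers you have already established: for any $i$ with $1\leq i\leq 2n-1$ one can pick $v_1,v_2\geq 1$ with $v_1,v_2\leq n$ and $v_1+v_2=i+1$, and then $\beta^S_{i,(v_1,v_2)}(A)=\binom n{v_1}\binom n{v_2}>0$, while for $i\geq 2n$ no such $(v_1,v_2)$ exists. This gives $\pd_S A=2n-1$ and hence $\depth A=1$ as you say. With that small clarification the proof is complete and correct.
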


\begin{proof}
The claims about the Poincar\'e series are immediate from \eqref{eq:mvs-ext} and \eqref{eq:mvs-po}. The ones about the dimension and the depth of $A$ are  from \eqref{eq:mvs}.
\end{proof}

For later use, we record that following result that is immediate from \eqref{eq:mvs-ext}.

\begin{lemma}
\label{le:extSAk}
\pushQED{\qed}
As a $\Lambda$-module, $\Ext{\ges 1}SAk[0,0,-1]$ is isomorphic to the ideal 
\[
(e_{j}\wedge f_{j}\mid 1\leq i,j\leq n)\ \text{of $\Lambda$}\qedhere
\]
\end{lemma}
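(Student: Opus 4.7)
The plan is to read off the lemma directly from the exact sequence \eqref{eq:mvs-ext}, together with the description of the map $\Ext{}S{\pi}k$ already given in the paragraph surrounding that sequence. Since the sequence is short exact, $\Ext{\ges 1}SAk$ is the kernel (after the shift by $[0,0,1]$) of the canonical surjection
\[
\Lambda = \Ext{}Skk \lra \Ext{}SCk \cong \Lambda/(\ul f)\oplus \Lambda/(\ul e),
\]
which sends $\omega$ to $(\ov{\omega},\ov{\omega})$. Thus, after undoing the shift, the task reduces to identifying the kernel of this map in homological degrees $\ges 2$ (equivalently, in all positive degrees).

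Next I would identify this kernel with the intersection ideal $(\ul e)\cap (\ul f)$ in $\Lambda$, which is immediate from the componentwise formulation of the map, and then verify that
\[
(\ul e)\cap (\ul f) \;=\; \bigl(e_i\wedge f_j \,\big|\, 1\le i,j\le n\bigr).
\]
The inclusion $\supseteq$ is clear since each $e_i\wedge f_j$ lies in both $(\ul e)$ and $(\ul f)$. For $\subseteq$, one uses the standard monomial basis of the exterior algebra $\Lambda=\wedge(e_1,\dots,e_n,f_1,\dots,f_n)$: an element of $(\ul e)$ is a $k$-linear combination of wedge monomials each containing at least one $e_i$, and an element of $(\ul f)$ is likewise a combination of monomials each containing at least one $f_j$; hence an element of the intersection is a combination of monomials containing at least one factor of the form $e_i$ and one of the form $f_j$, and any such monomial is visibly a $\Lambda$-multiple of some $e_i\wedge f_j$.

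Finally, one checks that the $\Lambda$-action induced from the connecting/embedding map in \eqref{eq:mvs-ext} coincides with the multiplication in $\Lambda$ (this is just the compatibility of $\Ext{}S{-}k$ with the $\Ext{}Skk$-action), and records the degree shift $[0,0,-1]$ required by the indexing in \eqref{eq:mvs-ext}. No step here looks obstructive; the main point is simply to unpack the surjectivity claim made just before \eqref{eq:mvs-ext} and to compute an elementary intersection of monomial ideals in an exterior algebra.
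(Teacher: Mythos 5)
Your argument is correct and is exactly what the paper has in mind when it says the lemma is "immediate from \eqref{eq:mvs-ext}": undoing the shift identifies $\Ext{\ges 1}SAk[0,0,-1]$ with the kernel of the $\Lambda$-linear surjection $\Lambda \to \Lambda/(\ul f)\oplus\Lambda/(\ul e)$ in degrees $\ge 2$, which is $(\ul e)\cap(\ul f)=(e_i\wedge f_j)$ (already concentrated in degrees $\ge 2$). Your elementary monomial-basis verification of $(\ul e)\cap(\ul f)=(e_i\wedge f_j)$ and the remark on $\Lambda$-linearity supply precisely the details the paper leaves to the reader.
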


This completes our discussion on the general linear Lie algebra.

\subsection{Special orthogonal Lie algebra}
Let $k$ be a field and $S$ the polynomial ring over $k$ in indeterminates $\ul p:=p_{1},\dots,p_{n}$ and $\ul q :=q_{1},\dots,q_{n}$. Thus
\[
S:=k
\bmatrix 
p_{1} & \dots & p_{n} \\
q_{1} & \dots & q_{n} 
\endbmatrix
\]
As before, $S$ is to be viewed as an $\BN^{2}$-graded $k$-algebra with each $p_{i}$ of degree $[1,0]$ and each $q_{i}$ of degree $[0,1]$.  Let $I$ be the $2\times 2$ minors of the matrix above and set $A = S/I$.   The bigraded Hilbert series of $A$ is 
\[
 \frac{1- st\sum_{i=0}^{n-2}  (-1)^i  \binom n{2+i}  h_i(s,t )}{(1-s)^{n}(1-t)^{n}}
\]
where $h_i(s,t)$ is the sum  of all monomials of degree $i$ in $s$ and $t$. In particular
\[
\hilb As = \frac{1+s(n-1)}{(1-s)^{n+1}}\,.
\]
The minimal free resolution of $A$ over $S$ is given by the Eagon-Northcott complex; see, for example, \cite[Section~2C]{BV}. It yields $\topp iSA\le i+1$ for each $i$, and hence $A$ is Koszul; see Remark~\ref{re:topR-k} From this one gets that the Poincar\'e series of $k$ over $A$ is
\[
\po [A]ku = \frac{(1+u)^{n+1}}{1 - u(n-1)}
\]

\section{The special linear case}
\label{se:sl}
In this section we study the moment map associated to the canonical representation of the special linear Lie algebra, $\fsl_{n}$; see \ref{mm:sl}.

Let $k$ be a field; for what follows, its characteristic has to be zero, or at least larger than $(n+1)/2$, for we need Lemma~\ref{le:relation}. As in Section~\ref{se:gl}, let $S$ be the polynomial ring over $k$ in indeterminates $\ul p:=p_{1},\dots,p_{n}$ in degree $[1,0]$, and $\ul q :=q_{1},\dots,q_{n}$, in degree $[0,1]$. Set
\[
B = S/I \quad \text{where $I =(\{p_{i}q_{j},p_{i}q_{i}-p_{j}q_{j}\mid 1\le i,j\le n\text{ with } i\ne j, \})$.}
\]
The ideal $I$ is homogenous, with respect to the $\BN^{2}$ grading on $S$, and hence the $k$-algebra $B$ inherits this grading. 

It is straightforward to check that $p_{1}q_{1}$ is in the socle of $B$ and that $B/(p_{1}q_{1})=A$, the $k$-algebra encountered in Section~\ref{se:gl}. There is thus an exact sequence of $\BN^{2}$-graded $S$-modules
\begin{equation}
\label{eq:abseq}
0\lra k[-1,-1] \xra{\ 1\mapsto p_{1}q_{1}\ } B \lra A \lra 0
\end{equation}
and from this one and \eqref{eq:hs-A} one obtains that the bigraded Hilbert series of $B$:
\[
\hilb{B}{s,t} = \frac 1{(1-s)^{n}} + \frac 1{(1-t)^{n}} - 1 + st \,.
\]

Next we determine the Poincar\'e series of $B$, as an $S$-module. To this end we compute $\Ext{}SBk$, exploiting the fact that this is a $\BN^{3}$-graded module over the $\BN^{3}$-graded $k$-algebra
\[
\Lambda = \Ext{}{S}kk\,.
\]
By \eqref{eq:mvs-po} one gets
\[
\hilb{\Lambda}{s,t,u} = (1+su)^{n}(1+tu)^{n}
\]
where $u$ is the cohomological degree. The exact sequence \eqref{eq:abseq} induces an exact sequence of graded $\Lambda$-modules
\[
\lra \Lambda [-1,-1,-1] \xra{\ \beta\ } \Ext{}SAk \lra \Ext{}SBk \lra \Lambda [-1,-1,0]\xra{\ \beta[1]\ }
\]
This gives an equality of formal power series
\[
\po B{s,t,u} = \hilb{\Coker(\beta)}{s,t,u} + u^{-1}\cdot \hilb{\Ker(\beta)}{s,t,u}\,.
\]
It thus remains to compute the Hilbert series of $\Ker(\beta)$ and $\Coker(\beta)$.

Let $C$ be the $S$-module $S/(\ul p)\oplus S/(\ul q)$, encountered in Section~\ref{se:gl}; see \eqref{eq:mvs}. The exact sequence \eqref{eq:mvs-ext}, reproduced in the top row of the diagram below, is one of graded $\Lambda$-modules:
\[
\xymatrixcolsep{1.3pc}
\xymatrixrowsep{1.3pc}
\xymatrix{
0\ar@{->}[r] & \Ext{\ges 1}SAk \ar@{->}[r]^-{\ \gamma\ } & \Lambda [0,0,1] \ar@{->}[r] 	&\Ext{}SCk [0,0,1] \ar@{->}[r] & 0  \\
 & \Lambda[-1,-1,-1] \ar@{->}[u]^{\beta}\ar@{=}[r] & \Lambda[-1,-1,-1]\ar@{->}[u]^{\gamma\beta}}
\]
The Snake Lemma then gives $\Ker(\beta) = \Ker (\gamma\beta)$ and an exact sequence 
\[
0\lra \Coker(\beta) \lra \Coker(\gamma\beta) \lra \Ext{}SCk [0,0,1]\lra 0\,,
\]
of graded $\Lambda$-modules. It thus remains to study the map
\[
\Lambda[-1,-1,-1] \xra{\ \gamma\beta\ } \Lambda [0,0,1]
\]
This map is $\Lambda$-linear, so it is determined by $\gamma\beta(1)$, where $1\in\Lambda$.
To identify this element, as in Section~\ref{se:gl}, let $e_{1},\dots,e_{n},f_{1},\dots,f_{n}$ be the basis of 
\[
\Lambda^{1} = \Hom k{\fm/\fm^{2}}k
\]
dual to that of the images of $\ul p,\ul q$ in $\fm/\fm^{2}$; here $\fm=(\ul p,\ul q)$.

\begin{Claim}
$\gamma\beta(1) = \sum_{1\les i\les n}e_{i}f_{i}$ in $\Lambda^{2}$.
\end{Claim}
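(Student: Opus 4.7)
The plan is to interpret $\gamma\beta(1)$ as a Yoneda 2-fold extension class and to compute it by lifting the identity of $k$ through the Koszul resolution.

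By construction, $\beta$ is the connecting map in the long exact sequence of $\Ext{}{S}{-}{k}$ arising from $0 \to k[-1,-1] \to B \to A \to 0$; hence $\beta(1)$ is the class of this extension, living in $\Ext{1}{S}{A}{k}$ in internal bidegree $[1,1]$. Similarly, $\gamma$ is the connecting map from the Mayer-Vietoris sequence $0 \to A \to C \to k \to 0$, that is, Yoneda multiplication by its extension class. Therefore $\gamma\beta(1) \in \Lambda^{2}$ in internal bidegree $[1,1]$ is represented by the spliced 2-fold extension
\[
0 \lra k[-1,-1] \lra B \lra C \lra k \lra 0,
\]
in which $B \to C$ is the composition $B \to A \hookrightarrow C$.

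To read off this class as a cocycle in $\Hom{S}{K_{2}}{k}$, I would use the Koszul resolution $K_\bullet = S \otimes \wedge(e_{1},\dots,f_{n})$ of $k$, with $d(e_i) = p_i$ and $d(f_i) = q_i$, and lift $\id_{k}$ to a chain map into the above 2-extension. At degree zero one can take $f_{0} \colon S \to C$, $1 \mapsto (1,0)$; this lifts the augmentation since $(1,0) \mapsto 1-0 = 1$ under $C \to k$. At degree one take $f_{1} \colon K_{1} \to B$ with $e_{i} \mapsto 0$ and $f_{i} \mapsto q_{i} \bmod I$; the identity $(B \to C) \circ f_{1} = f_{0} \circ d_{1}$ holds because $f_{0}(p_{i}) = (p_{i},0) = (0,0)$ in $C$ while $f_{0}(q_{i}) = (q_{i},0)$, and the image of $q_{i}$ under $B \to C$ is precisely $(q_{i},0)$.

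The key computation is the lift $f_{2} \colon K_{2} \to k[-1,-1]$. Applying $f_{1}$ to the Koszul differential gives $f_{1}d_{2}(e_{i} \wedge e_{j}) = 0$ and $f_{1}d_{2}(f_{i} \wedge f_{j}) = 0$, while
\[
f_{1}d_{2}(e_{i} \wedge f_{j}) = p_{i}q_{j} \bmod I.
\]
The defining relations of $B$ give $p_{i}q_{j} \equiv 0$ for $i \ne j$ and $p_{i}q_{i} \equiv p_{1}q_{1}$ for every $i$; since $k[-1,-1] \hookrightarrow B$ sends $1$ to $p_{1}q_{1}$, each $p_{i}q_{j} \bmod I$ is the image of $\delta_{ij} \in k$. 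So the required lift is $f_{2} \colon e_{i} \wedge f_{j} \mapsto \delta_{ij}$, with the remaining basis elements of $K_{2}$ sent to zero (already forced by the internal grading). Under the canonical identification $\Hom{S}{K_{2}}{k} \cong \Lambda^{2}$, this cocycle is precisely $\sum_{i=1}^{n} e_{i} f_{i}$. The only thing to watch is the bookkeeping of the internal and cohomological shifts attached to the two connecting maps; once those are aligned the chain-level computation itself is mechanical.
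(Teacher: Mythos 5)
Your proof is correct and takes essentially the same route as the paper: interpret $\gamma\beta(1)$ as the Yoneda class of the spliced $2$-fold extension $0\to k[-1,-1]\to B\to C\to k\to 0$ and read it off by lifting $\id_k$ through the Koszul resolution. Your explicit lift (with $f_0(1)=(1,0)$, $f_1(f_i)=q_i$, giving $f_2(e_i\wedge f_j)=\delta_{ij}$) is a clean variant of the paper's $\kappa_\bullet$ and in fact states the crucial degree-two value more carefully than the paper does, where the formula for $\kappa_2(P_iQ_j)$ should likewise be $\pm\delta_{ij}$ rather than $1$ for all $i,j$.
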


Indeed, $\beta(1)$ is the element in $\Ext 1SAk$ corresponding to the exact sequence \eqref{eq:abseq}, and $\gamma$ is the map defined by
concatenation with the sequence \eqref{eq:mvs}. It follows that $\gamma\beta(1)$ is the element of $\Lambda^{2}$ corresponding to the exact sequence
\begin{equation}
\label{eq:2ext}
0\lra k[-1,-1] \xra{\ 1\mapsto p_{1}q_{1}\ } B \xra{\ \iota\ } C  \xra{\ \pi\ } k \lra 0
\end{equation}
To identify this element in $\Lambda$, let $E$ be the Koszul complex of $\ul p,\ul q$. We view it as the exterior algebra $\bigwedge 
\bigoplus_{i} (SP_{i}\oplus SQ_{i})$ with $P_{i}\mapsto p_{i}$ and $Q_{i}\mapsto q_{i}$. The first steps of a lifting of the natural morphism $E\to k$ to the complex above is given below:
\[
\xymatrixcolsep{2pc}
\xymatrixrowsep{1pc}
\xymatrix{
0\ar@{->}[r] &k[-1,-1] \ar@{->}[r]^{\ 1\mapsto p_{1}q_{1}\ } & B  \ar@{->}[r]^-{\iota} & C 
					\ar@{->}[r]^-{\pi} &k \ar@{->}[r] & 0 \\
\cdots\ar@{->}[r] &{\bigoplus}_{i,j} S P_{i}Q_{j} \ar@{->}[u]^{\kappa_{2}} \ar@{->}[r] & {\bigoplus}_{i} (S P_{i}\oplus SQ_{i}) \ar@{->}[u]^{\kappa_{1}} 					\ar@{->}[r] & S \ar@{->}[u]^{\kappa_{0}} \ar@{->}[r] &k \ar@{=}[u] \ar@{->}[r] & 0}
\]
where the maps $\kappa_{i}$ are given by $\kappa_{0}(1) = (0,1)$, and								
\begin{gather*}
\kappa_{1}(P_{i}) = p_{i} \quad \text{and}\quad \kappa_{1}(Q_{i}) = 0 \quad \text{for each $i$} \\
\kappa_{2}(P_{i}P_{j}) = 0\,, \quad \kappa_{2}(Q_{i}Q_{j}) = 0\,, \quad \text{and}
\quad \kappa_{2}(P_{i}Q_{j}) = 1 \quad \text{for each $i,j$}.
\end{gather*}
The claim follows as \eqref{eq:2ext} corresponds to the class of $\kappa_{2}$ in $\Lambda^{2}=\HH{-2}{\Hom{S}Ek}$.

\medskip

From the preceding claim and Proposition~\ref{pr:maximal-rank}, it follows that the map $\gamma\beta$ has maximal rank, and hence that
\begin{gather*}
\hilb{\Ker{\beta}}{s,t,u} = u [(stu^{2}-1)(1+su)^{n}(1+tu)^{n}]_{+} \\
\hilb{\Coker{\beta}}{s,t,u} = u [(1-stu^{2})(1+su)^{n}(1+tu)^{n}]_{+} - u ((1+su)^{n} + (1+tu)^{n})
\end{gather*}
where $[h(s,t,u)]_{+}$ denotes positive part of $h(s,t,u)$; namely, those terms whose coefficients are negative integers are omitted.

\medskip

Summing up one gets the result below.

\begin{theorem}
\label{th:sl}
\pushQED{\qed}
Let $k$ be a field, and let $S$ and $B$ the $k$-algebras introduced at the beginning of this section; in particular,  the characteristic of $k$ is zero, or greater than $(n+1)/2$.  The trigraded Poincar\'e series of $B$ over $S$ is 
\begin{align*}
2 - (1+su)^{n}-(1+tu)^{n} 
	& \, +  \, [(1-stu^{2})(1+su)^{n}(1+tu)^{n}]_{+} \\
	& \, +  \,   u^{-2}[(stu^{2}-1)(1+su)^{n}(1+tu)^{n}]_{+} 
\end{align*}
The Poincar\'e series is thus
\[
1 - 2[(1+u)^{n}-1] + u^{-1}[(1-u^{2})(1+u)^{2n}-1]_{+}\, + \,    u^{-2}[(u^{2}-1)(1+u)^{2n}]_{+} 
\]
So the Betti numbers of $B$ over $S$ are
\[
\beta^{S}_{i}(B) = \begin{cases} 
1 & \text{for $i=0$} \\
\binom {2n}{i+1} - \binom{2n}{i-1}- 2 \binom n{i+1} & \text{for $1\leq i\leq n-1$} \\
\binom {2n}{i} - \binom{2n}{i+2} & \text{for $n\leq i\leq 2n$}
\end{cases}
\]
Moreover, $\dim B = n$ and $\depth B=0$.  \qedhere
\end{theorem}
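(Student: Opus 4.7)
The plan is to assemble the formula by unpacking the preceding setup: the relevant short exact sequence \eqref{eq:abseq}, the long exact sequence it induces on $\Ext{}S{-}k$, and the rank analysis of the connecting map $\beta\colon \Lambda[-1,-1,-1]\to \Ext{}SAk$. Since the Poincar\'e series of $B$ splits as
\[
\po B{s,t,u} \; = \; \hilb{\Coker(\beta)}{s,t,u} \; + \; u^{-1}\hilb{\Ker(\beta)}{s,t,u},
\]
the entire computation reduces to determining these two Hilbert series.

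First I would factor $\beta$ through the inclusion $\gamma\colon \Ext{\ges 1}SAk\hookrightarrow \Lambda[0,0,1]$ furnished by \eqref{eq:mvs-ext}, and apply the Snake Lemma to the diagram displayed before the \emph{Claim} to obtain $\Ker(\beta)=\Ker(\gamma\beta)$ and the short exact sequence relating $\Coker(\beta)$, $\Coker(\gamma\beta)$, and $\Ext{}SCk[0,0,1]$. The composite $\gamma\beta\colon \Lambda[-1,-1,-1]\to \Lambda[0,0,1]$ is $\Lambda$-linear, hence it is determined entirely by the single element $\gamma\beta(1)$, which I would identify as $\omega:=\sum_{i=1}^n e_i f_i$ via the Koszul lifting of the four-term extension \eqref{eq:2ext} spelled out in the proof of the \emph{Claim}.

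With the map identified as multiplication by $\omega$ on $\Lambda$, I would invoke Proposition~\ref{pr:maximal-rank} to conclude that $\cdot\omega\colon\Lambda\to\Lambda$ has maximal rank in each multi-degree. This immediately yields the positive-part formulas
\[
\hilb{\Ker(\gamma\beta)}{s,t,u} = u\bigl[(stu^{2}-1)(1+su)^{n}(1+tu)^{n}\bigr]_{+},
\]
and the analogous expression for $\Coker(\gamma\beta)$ as the positive part of $u(1-stu^{2})(1+su)^{n}(1+tu)^{n}$; subtracting the contribution of $\Ext{}SCk[0,0,1]$ via \eqref{eq:mvs-po} then gives $\hilb{\Coker(\beta)}{s,t,u}$. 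Summing these, together with the $\Ext 0S{A}k=k$ piece that survives in $\Coker(\gamma\beta)$ but is absorbed into the leading $1$, produces the claimed trigraded Poincar\'e series. Setting $s=t=1$ and expanding binomially then yields the ungraded Betti numbers, with the case split at $i=n$ accounting for which of the two bracketed expressions contributes nonnegatively in a given homological degree.

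Finally, the dimension and depth are read off directly from \eqref{eq:abseq}. Since $A$ has Krull dimension $n$ by Theorem~\ref{th:gl} and $k[-1,-1]$ is artinian, one has $\dim B=n$. On the other hand, the image of $p_1q_1$ is a nonzero element annihilated by the maximal ideal, so it lies in the socle of $B$, forcing $\depth B=0$. The main obstacle I anticipate is the identification $\gamma\beta(1)=\omega$, where one must verify rigorously that the Yoneda splice \eqref{eq:2ext} corresponds to the class of the Koszul $2$-cocycle $\kappa_2$; once this is done the rest is a matter of unwinding the exact sequences and collecting terms.
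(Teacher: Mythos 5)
Your proposal reproduces the paper's own argument step for step: the exact sequence \eqref{eq:abseq}, the resulting long exact sequence on $\operatorname{Ext}_S(-,k)$ and the splitting of the Poincar\'e series into $\operatorname{Hilb}(\operatorname{Coker}\beta)+u^{-1}\operatorname{Hilb}(\operatorname{Ker}\beta)$, the factorization of $\beta$ through $\gamma$ via \eqref{eq:mvs-ext}, the Snake Lemma, the identification $\gamma\beta(1)=\sum e_i f_i$ from the Koszul lifting of \eqref{eq:2ext}, the application of Proposition~\ref{pr:maximal-rank}, and the dimension/depth readoff from the socle element $p_1q_1$. This is essentially the same approach as the paper's, and the step you flag as the main obstacle is precisely the one the paper handles in its displayed Claim.
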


Next we focus on the (lack of) the Koszul property of $B$.

\begin{proposition}
\label{pr:sl-not-koszul}
One has $\topp iBk=i$ for $0\le i\le n$ and $\topp {n+1}Bk=n+2$; in particular, the $k$-algebra $B$ is not Koszul.
\end{proposition}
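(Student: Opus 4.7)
The first assertion, $\topp iBk = i$ for $0 \le i \le n$, is almost immediate from what is already in place. From the Betti number formula in Theorem~\ref{th:sl} one reads off $\topp iSB = i$ for all $1 \le i \le n-1$ (the ``piece 3'' contribution with internal total degree $i+2$ only kicks in for $i \ge n$). Applying Remark~\ref{re:topR-k} with $s = n-1$ gives $\topp iBk \le i$ for $i \le n$, and the reverse inequality is automatic.

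For the lower bound $\topp{n+1}Bk \ge n+2$ (which already forces $B$ not to be Koszul) I would argue by contradiction. Suppose $\topp{n+1}Bk \le n+1$. Combined with the first part this gives $\topp iBk \le i$ for all $i \le n+1$, so Lemma~\ref{le:massey} (applied with $s = n+1$) forces every class in the Koszul homology $H = \Tor{}SkB$ lying in $H_{n, v}$ with $\|v\| \ge n+2$ to be decomposable as a matric Massey product of elements of $H_{\ges 1}$. However, Theorem~\ref{th:sl} tells us that $H_{n, v}$ with $\|v\| = n+2$ has dimension $\binom{2n}{n} > 0$, and I claim no such class can actually be such a matric Massey product. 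Indeed, for $\langle W_1,\dots,W_t\rangle$ with $t \ge 2$ and entries $w_j$ in $H_{\ges 1}$, a typical entry $w$ of the output satisfies $|w| = \sum_j |w_j| + (t - 2)$ and $\|\deg w\| = \sum_j \|\deg w_j\|$. Setting $|w| = n$ forces $\sum_j |w_j| = n - t + 2 \le n$ with each $|w_j| \ge 1$, hence $|w_j| \le n - 1$ for every $j$. Theorem~\ref{th:sl} says $H_{j, v'}$ is concentrated in the linear strand $\|v'\| = j$ whenever $j < n$, so $\|\deg w_j\| = |w_j|$ for all $j$, and summing yields $\|\deg w\| = n - t + 2 < n + 2$ — a contradiction.

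For the matching upper bound $\topp{n+1}Bk \le n+2$ I would invoke the graded coefficient-wise Serre inequality $\po[B]k{s,u}\,(1 + u - u\,\po[S]B{s,u}) \le \po[S]k{s,u}$ (the same inequality that drives Remark~\ref{re:topR-k}, cf.\ \cite[Prop.~3.3.2]{Av:barca}). Each monomial $s^a u^b$ appearing in $u(\po[S]B{s,u} - 1)$ satisfies $a - b \in \{-1,+1\}$, and $a = b+1$ occurs only when $b \ge n+1$; a short check shows that in a product of such monomials with total $u$-degree at most $n+1$, at most one factor can contribute $a - b = +1$ (two such factors would already force $u$-degree $\ge 2(n+1)$), so the $s$-degree at $u^{n+1}$ is bounded by $n+2$. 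Convolution with $\po[S]k{s,u} = (1+su)^{2n}$ preserves this bound. The main obstacle is the non-decomposability step: matric Massey products are \emph{a priori} quite flexible, and the argument genuinely needs the sharp structural fact from Theorem~\ref{th:sl} that below homological degree $n$ the Koszul homology of $B$ sits entirely in the linear strand — without this the bidegrees would not be forced to clash.
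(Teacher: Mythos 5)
Your overall strategy for the lower bound matches the paper's: suppose $\topp{n+1}Bk \le n+1$, invoke Lemma~\ref{le:massey} with $s=n+1$ to conclude that every class in $H_{n,v}$ with $\|v\|\ge n+2$ must be a matric Massey product, and then exhibit a class that cannot be decomposable. The first part (via Remark~\ref{re:topR-k}) is also fine. But the key step of your argument — the indecomposability — contains a genuine error. You assert that for $j<n$ the Koszul homology $H_{j,v'}$ is concentrated in the strand $\|v'\|=j$, and you also write $\topp jSB=j$. Both are off by one: $B$ is presented by quadrics, so $\Tor 1SkB$ sits in internal degree $2$, and Theorem~\ref{th:sl} gives $\topp jSB=j+1$ for $1\le j\le n-1$; i.e.\ the linear strand is $\|v'\|=j+1$. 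Redoing your bookkeeping with the correct strand: if $|w|=n=\sum_j|w_j|+(t-2)$ then $\sum_j|w_j|=n-t+2$, and with $\|\deg w_j\|=|w_j|+1$ one gets $\|\deg w\|=\sum_j(|w_j|+1)=(n-t+2)+t=n+2$ exactly. So the total internal degrees do not clash at all, and the contradiction you claim evaporates. Total degree alone cannot obstruct decomposability here.

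The paper escapes this by exploiting the fine $\BN^2$-grading rather than the total degree. From the trigraded Poincar\'e series in Theorem~\ref{th:sl} one sees that $H_{i,v}=0$ for $i\ge 1$ whenever $v_1=0$ or $v_2=0$; hence every class in $H_{\ges 1}$ has bidegree with both components $\ge 1$, and any matric Massey product of $t\ge 2$ such classes has both components $\ge 2$. But $H_{n,[n+1,1]}\cong k$ (and symmetrically $H_{n,[1,n+1]}\cong k$), and $[n+1,1]$ has second component equal to $1$, so these classes cannot be decomposable. That is the contradiction. To repair your proof you would need to replace the total-degree count by precisely this bigraded count; as written, the proposal does not establish $\topp{n+1}Bk>n+1$.
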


\begin{proof}
From Theorem~\ref{th:sl} yields $\topp iSB = i+1$ for $i\le n-1$, and hence $\topp iBk=i$ for $0\le i\le n$, by Remark \ref{re:topR-k}.
It remains to verify the assertion about $\topp{n+1}Bk$.

Let $H$ denote the Koszul homology algebra of $B$. This is a $(\BN\times \BN^{2})$-graded algebra, with
\[
H_{i,v} \cong \Tor iSBk_{v} \quad \text{for $i\in \BN$ and $v\in \BN^{2}$.}
\]
Thus $\hilb{H}{s,t,u}$ is the Poincar\'e series of $B$ over $S$,  computed in Theorem~\ref{th:sl}. It follows from that result if $v_{1}=0$ or $v_{2}=0$, then $H_{i},v=0$. Thus, for any element $h\in H_{i,v}$ that is decomposable as a matric Massey product, one must have $v_{1}\ge 2$ and $v_{2}\ge 2$. On the other hand, one has
\[
H_{n+1,v} \cong k \quad \text{for $v=[n+1,1]$ and $v=[1,n+1]$.}
\]
Thus, elements in these subspaces of $H$ are indecomposable as matric Massey products. It now remains to  apply Lemma~\ref{le:massey} to deduce that $\topp {n+1}Bk>n+1$.
\end{proof}

\section{The symplectic case}
\label{se:sp}
In this section we study the moment map associated to the canonical representation of the symplectic Lie algebra, $\fsp_{n}$; see \ref{mm:sp}. Let $k$ be a field of characteristic not equal to two and $S$ the polynomial ring in $4n$ indeterminates:
\[
S:=k
\bmatrix 
p_{11} & \dots & p_{1n} & p_{21} & \dots &p_{2n} \\
q_{11} & \dots & q_{1n} & q_{21} & \dots &q_{2n} 
\endbmatrix
\]
Set $\ul p = \{p_{ij}\}_{i,j}$ and $\ul q = \{q_{ij}\}_{i,j}$. As before, we view $S$ as an $\BN^{2}$-graded $k$-algebra with the $\ul p$ of degree $[1,0]$ and the $\ul q$ of degree $[0,1]$.  Let $I$ be the ideal generated by the $2n^{2}+ n$ homogenous quadratic forms
\begin{gather}
 p_{1i}q_{1j}-p_{2i}q_{2j}  \quad 1\leq i,j\leq n  \\
 p_{1i}q_{2i}, p_{2i}q_{1i} \quad 1\leq i\leq n \\
 p_{1i}q_{2j} + p_{1j}q_{2i}  \quad 1\leq i<j\leq n  \\
 p_{2i}q_{1j} + p_{2j}q_{1i}  \quad 1\leq i<j\leq n  
\end{gather}
The focus of this section is on the $k$-algebra $Z:=S/I$, which is also $\BN^{2}$-graded. 

\begin{lemma}
\label{le:sp-socle}
The socle of $Z$ is $J:= (\ul p)(\ul q)$, viewed as an ideal in $Z$. It is a vector space of rank $2n^2 - n$, concentrated in degree $[1,1]$.
\end{lemma}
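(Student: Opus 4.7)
The plan is to prove the lemma in three parts: (i) show that $\fm J = 0$ in $Z$, so that $J$ lies in the socle of $Z$ and is concentrated in bidegree $[1,1]$; (ii) compute $\dim_k J = 2n^2 - n$; (iii) show the socle is contained in $J$ via a short exact sequence argument.

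For (i), since $J = (\ul p)(\ul q)\cdot Z$ is generated by elements of $S_{1,1}$, the condition $\fm J = 0$ is equivalent to $S_{a,b} \subseteq I$ whenever $a,b\ge 1$ and $a+b\ge 3$. Degree considerations reduce this to the bidegrees $(2,1)$ and $(1,2)$. The automorphism of $S$ swapping $p_{ei}\leftrightarrow q_{ei}$ permutes the four families of generators of $I$ (up to sign and relabeling of $i,j$), so it interchanges these two bidegrees, and we may focus on $(2,1)$, whose monomial basis is $p_{\epsilon A}p_{\delta B}q_{\gamma C}$. The second involution, swapping the index $1\leftrightarrow 2$, also preserves $I$, and together with commutativity $p_{\epsilon A}p_{\delta B}=p_{\delta B}p_{\epsilon A}$ reduces the verification to two representative monomials, say $T(a,i,j):=p_{2a}p_{1i}q_{1j}$ and $U(a,i,j):=p_{1a}p_{1i}q_{1j}$. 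For $T$: two applications of the first family give, modulo $I$, $T(a,i,j)=p_{2a}p_{2i}q_{2j}=p_{2i}p_{1a}q_{1j}=T(i,a,j)$, a symmetry in slots $1$ and $2$; and the fourth family, extended via the second so that $p_{2a}q_{1j}\equiv -p_{2j}q_{1a}$ for all $a,j$, yields $T(a,i,j)\equiv -T(j,i,a)$, an antisymmetry in slots $1$ and $3$. Composing these identities around the $S_3$-orbit of $(a,i,j)$ produces a net sign change on $T$, forcing $2T\equiv 0$ in $Z$ and hence $T=0$ since $\chr k\ne 2$; an analogous argument, with the third family in place of the fourth, handles $U$.

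Part (ii) is then immediate: $J$ coincides with $Z_{1,1}$, whose dimension is $4n^2-(2n^2+n)=2n^2-n$ by counting the $2n^2+n$ linearly independent generators of $I$ inside $S_{1,1}$. For (iii), the quotient $Z/J$ is the quotient of $k[\ul p,\ul q]$ by the ideal $(\ul p)\cap(\ul q)=(\ul p)(\ul q)$, which is precisely the algebra $A$ of Section~\ref{se:gl} taken with $2n$ variables on each side; Theorem~\ref{th:gl} then gives $\depth(Z/J)=1$, so the socle of $Z/J$ vanishes. Applying the left-exact socle functor to $0\to J\to Z\to Z/J\to 0$ embeds the socle of $Z$ into the socle of $Z/J$ with kernel the socle of $J$; the kernel is $J$ itself by (i), and the target is zero, whence the socle of $Z$ equals $J$.

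The main obstacle is the index combinatorics in (i); without the two involutions of $I$ the case analysis would be unwieldy, and the key cancellation that forces $2T\equiv 0$ depends crucially on $\chr k\ne 2$.
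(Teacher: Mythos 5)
Your overall strategy---show $\fm J=0$ directly, then compare socles along $0\to J\to Z\to Z/J\to 0$---is sound, and parts (ii) and (iii) are correct. Part (iii) differs mildly from the paper, which observes instead that $Z/J\cong S/(\ul p)(\ul q)$ is reduced (an intersection of the two primes $(\ul p)$ and $(\ul q)$), hence has zero socle for an even more elementary reason than $\depth A=1$; either works.

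Part (i), however, contains a genuine gap: the reduction to ``two representative monomials'' is an undercount. Writing $a_i=p_{1i}$, $b_i=q_{1i}$, $c_i=p_{2i}$, $d_i=q_{2i}$, the six monomial types in bidegree $(2,1)$ are $aab$, $aad$, $acb$, $acd$, $ccb$, $ccd$, and under the involution $1\leftrightarrow 2$ (combined with commutativity of the two $p$'s) they fall into \emph{three} orbits: $\{aab,ccd\}$, $\{aad,ccb\}$, $\{acb,acd\}$. Your $T=p_{2a}p_{1i}q_{1j}$ covers the orbit $\{acb,acd\}$ and your $U=p_{1a}p_{1i}q_{1j}$ covers $\{aab,ccd\}$, but the orbit $\{aad,ccb\}$---monomials $p_{1a}p_{1i}q_{2j}$ and their mirrors $p_{2a}p_{2i}q_{1j}$---is never addressed. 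Invoking the first family $a_ib_j\equiv c_id_j$ in addition does not repair the count: that identification merges $\{aab,ccd\}$ with $\{acb,acd\}$, but leaves $\{aad,ccb\}$ as its own class, so you would then have reduced to two classes yet still picked both of your representatives from the same one. You must therefore treat a third representative, say $V(a,i,j):=p_{1a}p_{1i}q_{2j}=a_aa_id_j$. The fix fits your existing pattern exactly: $V$ is trivially symmetric in slots $1,2$, and the relations $p_{1i}q_{2j}+p_{1j}q_{2i}\in I$ and $p_{1i}q_{2i}\in I$ give $a_ad_j\equiv -a_jd_a$ for all $a,j$, hence $V(a,i,j)\equiv -V(j,i,a)$; running the $S_3$ argument again yields $2V\equiv 0$, so $V=0$ since $\chr k\neq 2$. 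With that third case included, your proof of (i) is complete; it is then essentially a tidier, symmetry-organized version of the paper's, which after reducing $aab$ and $acb$ to $acd$ and $ccd$ via the first family handles $aad$, $acd$, $ccb$, $ccd$ one at a time with the same three-cycle sign trick.
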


\begin{proof}
The main task is to verify that $J$ is in the socle of $Z$; then, since the ring $Z/J$ is  reduced (it is isomorphic to $S/(\ul p)(\ul q)$) it would follow that $J$ is equal to the socle of $Z$. Moreover, then the assertion about the rank of $J$ is clear.  

For the proof, it is convenient to relabel the variables $\ul p$ and $\ul q$ as follows:
\[
a_{i}:=p_{1i}\,,\quad b_{i}:=q_{1i}\,,\quad c_{i}:=p_{2i}\,,\quad d_{i}:=q_{2i}\quad\text{for each $i$.}
\]
Since the characteristic of $k$ is not two, it is easy to verify that the ideal $I$ can be generated by the elements 
\begin{enumerate}[{\quad\rm(1)}]
\item $a_{i}b_{j} - c_{i}d_{j}$
\item $a_{i}d_{j} + d_{i}a_{j}$
\item $c_{i}b_{j} + b_{i}c_{j}$
\end{enumerate}
where $1\le i,j\le n$. Recall that $Z$ is $\BN^{2}$-graded, with each $a_{i}$ and $c_{i}$ have degree $[1,0]$ while each $b_{i}$ and $d_{i}$ has degree $[0,1]$. The desired result is that the component $Z_{[1,1]}$ is the socle of $Z$. This is equivalent to the assertion that $Z_{[2,1]}=0=Z_{[1,2]}$. By symmetry it suffices to prove that $Z_{[2,1]}=0$, that is to say, that the monomials of the type
\[
aab, \quad aad, \quad  acb, \quad  acd, \quad  ccb, \quad  ccd
\]	
are all zero in $Z$. By virtue of equation (1) above, monomials of the type $aab$, respectively $acb$, are equal to those of the type $acd$, respectively $ccd$. It thus suffices to consider the following types of monomials: $aad$, $acd$, $ccb$, and  $ccd$.

\medskip
\emph{Types} $aad$ and $ccb$: Using equation (2), one gets that
\[
a_{i}a_{j}d_{k}= -a_{i}d_{j}a_{k} = -a_{k}a_{i}d_{j}\,.
\]
Note that the indices on the right can be obtained from the left by applying the three cycle $(1\,2\,3)$, and this involves a change of sign.
So, applying the same rule three time one gets that
\[
a_{i}a_{j}d_{k}=  - a_{i}a_{j}d_{k} 
\] 
and hence that $a_{i}a_{j}d_{k}=0$; recall that the characteristic of $k$ is not two.

A similar argument, now using (3), shows that $c_{i}c_{j}b_{k}=0$ for all $i,j,k$.

\medskip

\emph{Type} $acd$: Now using equation (1) twice and then equation (2), one gets
\[
a_{i}c_{j}d_{k} = a_{i}a_{j}b_{k} = c_{i}a_{j}d_{k} = - c_{i}d_{j}a_{k} = -a_{k}c_{i}d_{j}
\]
Once again, the indices on the right are obtained from those on the left by applying a three cycle, along with a change of sign. So applying this thrice we deduce that $a_{i}c_{j}d_{k} = 0$, as desired.

\medskip

\emph{Type} $ccd$: Applying (1), then (3), and then again (1) yields
\[
c_{i}c_{j}d_{k} = c_{i}a_{j}b_{k}= - b_{i}a_{j}c_{k} = -d_{i}c_{j}c_{k} = -c_{j}c_{k}d_{i}
\]	
so arguing as before one deduces that $c_{i}c_{j}d_{k}=0$. 
\end{proof}

Set $A=S/(\ul p)(\ul q)$. By Lemma~\ref{le:sp-socle} one has an exact sequence 
\begin{equation}
\label{eq:azseq}
0\lra J \lra Z \xra{\ \ve\ } A\lra 0
\end{equation}
of $\BN^{2}$-graded $S$-modules. Thus the bigraded Hilbert series of $Z$ is
\[
\hilb{Z}{s,t} = \frac 1{(1-s)^{2n}} + \frac 1{(1-t)^{2n}} - 1 + (2n^{2}-n)st \,.
\]
We compute the Poincar\'e series of $Z$. The crucial computation is the following.

\begin{lemma}
\label{le:sp-map}
The exact sequence \eqref{eq:azseq}  induces the following isomorphisms and exact sequences of $\BN^{2}$-graded $k$-vector spaces:
\begin{gather*}
\Ext{0}SAk\xra{\ \cong\ } \Ext{0}SZk \\
0\lra \Hom kJk \lra \Ext{1}SAk\lra \Ext{1}SZk\lra 0 \\
\Ext{1}SJk\xra{\ \cong\ } \Ext{2}SAk \\
0\lra \Ext{i}SZk \lra \Ext{i}SJk\lra \Ext{i+1}SAk\lra 0 
\end{gather*}
for each $i\ge 2$.
\end{lemma}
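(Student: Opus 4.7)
The plan is to apply $\Ext{*}S{-}k$ to the short exact sequence \eqref{eq:azseq} and extract each claim from the resulting long exact sequence using internal-degree analysis together with the $\Lambda$-module structure on $\Ext{*}SAk$ from Lemma~\ref{le:extSAk}, where $\Lambda = \Ext{*}Skk$. The three facts I will rely on are: $\Ext iSJk$ is concentrated in internal bidegrees $v$ with $v_1, v_2 \geq 1$ and $v_1 + v_2 = i + 2$ (because $\fm$ annihilates $J$ and $J$ sits in bidegree $[1,1]$, so $\Ext iSJk \cong \Hom kJk \otimes_k \Ext iSkk$ up to a shift by $[1,1]$); $\Ext iSAk$ is concentrated in bidegrees $v$ with $v_1, v_2 \geq 1$ and $v_1 + v_2 = i + 1$, and $\Ext{\ges 1}SAk$ is generated as a $\Lambda$-module by $\Ext 1SAk$, so $\Ext{j+1}SAk = \Lambda^j \cdot \Ext 1SAk$ for every $j \geq 0$ (from Theorem~\ref{th:gl} applied with $n$ replaced by $2n$, together with Lemma~\ref{le:extSAk}); and $\Ext 0SZk = k$ sits in bidegree $[0,0]$ while $\Ext 1SZk$ is concentrated in bidegree $[1,1]$, since $I$ is generated by bidegree-$[1,1]$ quadrics.

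With these facts in hand, the first isomorphism is immediate from $\Hom SAk = k = \Hom SZk$. The short exact sequence $0 \to \Hom kJk \to \Ext 1SAk \to \Ext 1SZk \to 0$ is obtained by restricting the long exact sequence to internal bidegree $[1,1]$, where $\Ext 0SZk^{[1,1]}$ and $\Ext 1SJk^{[1,1]}$ both vanish.

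For the isomorphism $\Ext 1SJk \xra{\cong} \Ext 2SAk$, I restrict the long exact sequence to the bidegrees $v$ with $v_1 + v_2 = 3$; there $\Ext 1SZk^v = 0$ and $\Ext 2SJk^v = 0$, so the sequence collapses to $0 \to \Ext 1SJk^v \xra{\delta_1} \Ext 2SAk^v \to \Ext 2SZk^v \to 0$. The ranks are $\rank_k \Ext 1SJk^v = (2n^2-n)\binom{2n}{v_1-1}\binom{2n}{v_2-1}$ and $\rank_k \Ext 2SAk^v = \binom{2n}{v_1}\binom{2n}{v_2}$, and a brief computation shows they coincide in the two relevant bidegrees $v \in \{[2,1],[1,2]\}$. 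Combined with the automatic injectivity of $\delta_1$ (from $\Ext 1SZk^v = 0$ at sum $3$), this rank equality forces $\delta_1$ to be an isomorphism.

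For the final family of short exact sequences, the key input is that the connecting map $\delta \colon \Ext{*}SJk \to \Ext{*+1}SAk$ is left-$\Lambda$-linear, by associativity of Yoneda composition applied to the extension class $[\xi] \in \Ext 1SAJ$ of \eqref{eq:azseq}. Since $\Ext{*}SJk$ is a free $\Lambda$-module generated by $\Hom kJk$ in cohomological degree $0$, the image satisfies $\delta_j(\Ext jSJk) = \Lambda^j \cdot \delta_0(\Hom kJk)$. The third claim, just established, gives $\Lambda^1 \cdot \delta_0(\Hom kJk) = \Ext 2SAk$, and so for each $j \geq 1$ I obtain
\[
\delta_j(\Ext jSJk) = \Lambda^{j-1} \cdot \Ext 2SAk = \Ext{j+1}SAk,
\]
using that $\Ext{j+1}SAk = \Lambda^{j-1} \cdot \Ext 2SAk$ follows from the $\Lambda$-module generation of $\Ext{\ges 1}SAk$ by $\Ext 1SAk$. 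Thus each $\delta_j$ is surjective for $j \geq 1$, and exactness of the long exact sequence forces $\Ext iSAk \to \Ext iSZk$ to vanish for all $i \geq 2$, yielding the stated short exact sequences. I expect the main hurdle to be verifying the rank equality that drives the third claim; everything else then follows formally from the $\Lambda$-linearity of $\delta$ and Lemma~\ref{le:extSAk}.
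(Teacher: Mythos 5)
Your proposal is correct and follows essentially the same strategy as the paper: apply $\Hom_S(-,k)$ to \eqref{eq:azseq}, analyze the connecting maps, verify the rank equality $\rank_k \Ext{1}SJk = \rank_k\Ext{2}SAk = (2n^2-n)\cdot 4n$, and then use the $\Lambda$-linearity of the connecting map together with Lemma~\ref{le:extSAk} (generation of $\Ext{\ges 1}SAk$ by $\Ext{1}SAk$) to propagate surjectivity. The one genuine variation is in how you dispose of the map $\eta\colon \Ext 1SZk \to \Ext 1SJk$: the paper establishes $\eta=0$ by an Euler-characteristic rank count ($\rank\eta = (2n^2+n) - 4n^2 + (2n^2-n) = 0$, which requires knowing $\rank_k\Ext 1SZk = 2n^2+n$ from the generator count of $I$), whereas you observe directly that $\Ext 1SZk$ lives in bidegree $[1,1]$ while $\Ext 1SJk$ lives in bidegrees of total degree $3$, so the map vanishes for degree reasons. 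Your degree argument is a touch cleaner and sidesteps the generator count, but the load-bearing steps --- the numerical coincidence making $\eth$ an isomorphism, and the $\Lambda$-module generation argument --- are identical.
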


\begin{proof}
Indeed, the first isomorphism is clear. Given this, applying $\Hom S-k$ to the exact sequence \eqref{eq:azseq} yields an exact sequence 
\[
0\lra \Hom kJk \lra \Ext{1}SAk\lra \Ext{1}SZk\xra{\ \eta\ } \Ext{1}SJk\xra{\ \eth\ }\Ext{2}SAk 
\]
of $\BN^{2}$-graded $k$-vector spaces. We claim $\eta=0$ and $\eth$ is an isomorphism, which justifies all but the last assertion in the statement.  To verify the claim, note that
\begin{align*}
\rank_{k}\eta 
	& = \rank_{k} \Ext{1}SZk - \rank_{k} \Ext{1}SAk + \rank_{k} \Hom kJk \\
	&= (2n^{2}+n) - 4n^{2} + (2n^{2}-n) \\
	&= 0 
\end{align*}
where the second equality comes from Lemma~\ref{le:sp-socle}. Thus $\eta=0$ and hence the map $\eth$ is injective.  As $\Ext 1SJk$ is isomorphic to $\Ext 1Skk\otimes_{k}\Hom kJk$, one gets the first equality below
\begin{align*}
\rank_{k} \Ext 1SJk & = (2n^{2}-n) \rank_{k}\Ext 1Skk = (2n^{2}-n)4n \\
\rank_{k} \Ext 2SAk & = \binom {4n}3 - 2\binom {2n}3 = (2n^{2}-n)4n 
\end{align*}
The second equality is from Theorem~\ref{th:gl}; keep in mind that the $\ul p$ and $\ul q$ have been doubled. Thus it follows that $\eth$ is an isomorphism, as desired.

To verify the exactness of the last family of sequences, it suffices to prove that the map 
\[
\Ext{\ges 1}SJk \lra \Ext{\ges 2}SAk
\]
induced by \eqref{eq:azseq} is surjective. This map is linear with respect to the action of $\Lambda:=\Ext{}Skk$. Since $\Ext{\ges 2}SAk$ is generated by $\Ext 2SAk$ as a $\Lambda$-module, by Lemma~\ref{le:extSAk}, it  suffices to verify that the map of $k$-vector spaces
\[
\Ext 1SJk \lra \Ext 2SAk
\]
 is surjective. This is the map $\eth$ that we already know is an isomorphism. 
\end{proof}

Lemma~\ref{le:sp-map} implies that the minimal resolution of $Z$ over $S$ is pure, and in fact linear after the second syzygy.
From the previous claim and Theorem~\ref{th:gl} (for $2n$) one gets the following result.

\begin{theorem}
\label{th:sp}
Let $k$ be a field, and let $S$ and $A$ the $k$-algebras introduced at the beginning of this section; in particular,  the characteristic of $k$ is zero, or odd. The nonzero graded Betti numbers of $A$ over $S$ are 
\[
\beta^{S}_{i,v}(A) = \begin{cases} 
1   & \text{$i=0$ and $v=[0,0]$} \\
(2n^{2}+n)   & \text{$i=1$ and  $v=[1,1]$}\\
(2n^{2}-n)\binom {2n}{v_{1}-1} \binom {2n}{v_{2}-1} - \binom {2n}{v_{1}} \binom {2n}{v_{2}}
		   & \text{$i\ge 2$, $v_{i}\ge 1$, and $v_{1}+v_{2}=i+2$} 
\end{cases}
\]
The Betti numbers of $Z$ over $S$ are
\[
\beta^{S}_{i}(A) = \begin{cases} 
1 & \text{for $i=0$} \\
2n^{2}+n & \text{for $i=1$} \\
(2n^{2}-n)\binom {4n}{i} - \binom{4n}{i+2}+ 2\binom{2n}{i+2}\quad &\text{for $i\ge 2$} 
\end{cases} 
\]
Moreover, $\dim Z = 2n$ and $\depth Z=0$. 
\end{theorem}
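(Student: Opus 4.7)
The proof has been almost completely set up by Lemmas~\ref{le:sp-socle} and \ref{le:sp-map}: the plan is simply to assemble the pieces and read off the formulas. First I would compute the graded Betti numbers of $J$ over $S$. By Lemma~\ref{le:sp-socle}, as a $\BN^{2}$-graded $k$-module $J$ is isomorphic to $k^{2n^{2}-n}$ placed in degree $[1,1]$. Hence
\[
\betti{i,v}SJ \,=\, (2n^{2}-n)\binom{2n}{v_{1}-1}\binom{2n}{v_{2}-1}
\]
for $v_{1},v_{2}\ge 1$ with $v_{1}+v_{2}=i+2$, and $\betti iSJ=(2n^{2}-n)\binom{4n}{i}$.

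Next I would invoke Theorem~\ref{th:gl} \emph{applied to $A=S/(\ul p)(\ul q)$} (with $2n$ in place of $n$, since the $\ul p$ and $\ul q$ each have $2n$ variables) to read off
\[
\betti{i,v}SA=\binom{2n}{v_{1}}\binom{2n}{v_{2}}\quad\text{and}\quad\betti iSA=\binom{4n}{i+1}-2\binom{2n}{i+1}
\]
for $i\ge 1$. Then I would feed these into the four exact sequences of Lemma~\ref{le:sp-map}. In homological degree $0$ we get $\betti 0SZ=1$; in degree $1$ the short exact sequence gives
\[
\betti 1SZ=\betti 1SA-\rank_{k}\Hom k Jk=4n^{2}-(2n^{2}-n)=2n^{2}+n,
\]
concentrated in bidegree $[1,1]$. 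For $i\ge 2$ the short exact sequence
\[
0\lra \Ext iSZk\lra \Ext iSJk\lra \Ext{i+1}SAk\lra 0
\]
gives $\betti{i,v}SZ=\betti{i,v}SJ-\betti{i+1,v}SA$ for each bidegree $v$, and summing over $v$ yields the total Betti number formula in the theorem. The verification is entirely mechanical binomial bookkeeping; the only subtlety is to track the bidegree shift by $[1,1]$ coming from the fact that $J$ sits in that bidegree, and to check that the formula for $i=1$ is consistent with the general pattern (which amounts to the cancellation $\binom{2n}{1}\binom{2n}{1}-(2n^{2}-n)=2n^{2}+n$).

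For the assertions about dimension and depth, I would use \eqref{eq:azseq} together with Theorem~\ref{th:gl} applied to $A$: since $\dim J=0$, the exact sequence gives $\dim Z=\dim A=2n$. The depth assertion is immediate from Lemma~\ref{le:sp-socle}, for the socle of $Z$ is nonzero, which forces $\depth Z=0$ (equivalently, the nonvanishing of $\betti{4n}SZ$ visible from the Betti number formula at $i=4n$ certifies $\pd_{S}Z=4n$, and one applies the Auslander--Buchsbaum formula).

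Strictly speaking there is no serious obstacle here; the only place where I would take care is in verifying the degree shifts between the cohomological grading used for $\Ext{}S{-}k$ (where $\Hom kJk$ lives in upper bidegree $[-1,-1]$) and the homological grading used for the Betti numbers $\betti{i,v}S{-}$, so that the formula $\betti{i,v}SJ=(2n^{2}-n)\betti{i,v-[1,1]}Sk$ is applied consistently. Once this is in place, each of the four conclusions of Lemma~\ref{le:sp-map} translates directly into the corresponding range of the Betti table displayed in the statement.
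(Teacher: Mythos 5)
Your proof is correct and takes essentially the same route as the paper: the paper's own proof is the one-liner ``the assertions about the (graded) Betti numbers have already been justified'' (by Lemmas~\ref{le:sp-socle} and \ref{le:sp-map}), and the dimension and depth claims are read off from \eqref{eq:azseq}. Your assembly of the exact sequences from Lemma~\ref{le:sp-map}, the degree-$[1,1]$ shift for $J$, the application of Theorem~\ref{th:gl} with $2n$ in place of $n$, and the socle/Auslander--Buchsbaum argument for $\depth Z=0$ all match what the paper leaves implicit.
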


\begin{proof}
The assertions about the (graded) Betti numbers have already been justified. The dimension and the depth of $Z$ are from \eqref{eq:azseq}.
\end{proof}

This has the following corollary. 

\begin{proposition}
\label{pr:sp-koszul}
The $k$-algebra $Z$ is not Koszul.
\end{proposition}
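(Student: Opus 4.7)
The proof will follow the template of Proposition~\ref{pr:sl-not-koszul}. Suppose, toward a contradiction, that $Z$ is Koszul, so $\topp iZk \les i$ for every $i$. Then Lemma~\ref{le:massey} (with $s$ taken as large as needed) implies that every element of the Koszul homology $H$ of $Z$ lying in $H_{i,v}$ with $i\ges 2$ and $\|v\|\ges i+2$ is decomposable as a matric Massey product.

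The key observation from Theorem~\ref{th:sp} is that the nonzero Betti numbers $\beta^{S}_{i,v}(Z)$ with $i\ges 1$ all satisfy $v_{1}\ges 1$ and $v_{2}\ges 1$, so $H_{\ges 1}$ is concentrated in bidegrees with both coordinates positive. Because the bidegree of a matric Massey product $\langle W_{1},\dots,W_{s}\rangle$ is the sum of the bidegrees of its entries, and such a product has $s\ges 2$, every element of $H$ that is decomposable in this sense must lie in a bidegree $(v_{1},v_{2})$ with $v_{1}\ges 2$ and $v_{2}\ges 2$.

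The plan is therefore to exhibit a nonzero element of $H_{i,v}$ with $v_{1}=1$ and $\|v\|\ges i+2$, which will be indecomposable by the bidegree constraint and so contradict the consequence of Koszulness. The simplest choice is $i=2$ and $v=[1,3]$, for which $\|v\|=4=i+2$. By Theorem~\ref{th:sp},
\[
\beta^{S}_{2,[1,3]}(Z)=(2n^{2}-n)\binom{2n}{0}\binom{2n}{2}-\binom{2n}{1}\binom{2n}{3}=\frac{n^{2}(2n-1)(2n+1)}{3},
\]
which is positive for every $n\ges 1$, so $H_{2,[1,3]}\ne 0$ and $Z$ cannot be Koszul. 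The only non-routine step is this positivity check, and it is a short binomial simplification; essentially the same calculation gives $\beta^{S}_{i,[1,i+1]}(Z)>0$ for every $i\ges 2$, so many alternative $(i,v)$ would serve equally well.
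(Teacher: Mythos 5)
Your proof is correct, but it takes a genuinely different route from the paper's.

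The paper's own argument is shorter and purely numerical: it invokes Remark~\ref{re:notkoszul} (the Avramov--Conca--Iyengar subadditivity bound) and simply checks that
\[
\beta^{S}_2(Z)_{4} = \beta^{S}_2(Z) = \tfrac 53 n^{2}(4n^{2}-1) > \tbinom{\beta^{S}_{1}(Z)}2 = \tfrac 12 (2n^{2}+n)(2n^{2}+n-1),
\]
which immediately rules out Koszulness. Here the key structural fact being exploited is that the minimal resolution of $Z$ is pure in each homological degree, so the entire second Betti number sits in total degree $4$; the bidegree structure is not needed.

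You instead transplant the matric Massey product argument used in Proposition~\ref{pr:sl-not-koszul} for $\fsl_n$: all nonzero Koszul homology of $Z$ in positive homological degree sits in bidegrees with both coordinates at least $1$, so anything decomposable as a matric Massey product (with $s\ge 2$ factors) must have both coordinates at least $2$; yet $H_{2,[1,3]}\ne 0$ since $\beta^{S}_{2,[1,3]}(Z)=\tfrac{n^{2}(2n-1)(2n+1)}{3}>0$. Combining this with Lemma~\ref{le:massey} (which under the Koszul hypothesis applies for all $i\ge 2$) yields the contradiction. This is a valid derivation, and it has the virtue of making the $\fsl_n$ and $\fsp_n$ cases parallel; the cost is that it requires the full $\BN^{2}$-graded Betti numbers and the suspension machinery, whereas the paper's route only needs total degrees and a one-line inequality. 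One small caveat: your parenthetical claim that $\beta^{S}_{i,[1,i+1]}(Z)>0$ for \emph{every} $i\ge 2$ should be restricted to $2\le i\le 2n$ (outside that range $\binom{2n}{i}=0$), but this does not affect your argument since $i=2$ suffices.
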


\begin{proof}
From Theorem~\ref{th:sp} one gets the first equality below
\[
\beta^{S}_2(Z)_{4} = \beta^{S}_2(Z) = \frac 53 n^{2}(4n^{2}-1) >  \frac 12 (2n^{2}+n)(2n^{2}+n-1)=\binom{\beta^{S}_{1}(Z)}2
\]
The inequality is readily verified. Now apply Remark~\ref{re:notkoszul}. 
\end{proof}

\appendix

\section{Quadratic exterior forms and maximal rank}
The main task of this section is to prove Proposition~\ref{pr:maximal-rank}, which was used in the proof of Theorem~\ref{th:sl}.
Throughout $k$ will be a field. For elements $\bs x=x_1,\dots, x_u$ in a commutative ring, we write $s_i(\bs x)$ for the $i$-th elementary symmetric polynomials in $\bs x$  with the convention that $s_0(\bs x)=1$ and $s_i(\bs x)=0$ for $i<0$ or $i>u$. 

\begin{lemma} 
\label{le:relation} 
In  $k[x_1,\dots, x_u, y_1,\dots, y_v]/J$, where $J=(y_1^2,\dots, y_v^2,x_1^2,\dots, x_u^2)$, for every $d\geq 0$ there is an equality 
\[
(s_1(\bs x)+s_1(\bs y))\sum_{k=0}^d (-1)^k  k! (d-k)! s_{k}(\bs x) s_{d-k}(\bs y)=(d+1)! (s_{d+1}(\bs y)+(-1)^d  s_{d+1}(\bs x))\,.
\]
\end{lemma}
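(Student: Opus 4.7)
The plan is to reduce the identity to a straightforward telescoping cancellation after first extracting one crucial simplification that the relations in $J$ provide.

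The key lemma to prove first is the following: in the quotient ring $R:=k[x_1,\dots,x_u,y_1,\dots,y_v]/J$ one has
\[
s_1(\bs x)\cdot s_k(\bs x)\, =\, (k+1)\,s_{k+1}(\bs x)
\quad\text{and}\quad
s_1(\bs y)\cdot s_k(\bs y)\, =\, (k+1)\,s_{k+1}(\bs y)
\]
for every $k\ge 0$. Indeed, expanding $s_1(\bs x)s_k(\bs x)=\sum_i x_i \sum_{|I|=k}\prod_{j\in I}x_j$, the terms with $i\in I$ vanish because $x_i^2\in J$, while the surviving terms reassemble monomials indexed by $(k+1)$-subsets $T$, each appearing exactly $k+1$ times (once for every choice of $i\in T$). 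The argument for $\bs y$ is identical.

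Granted this, I would plug directly into the left-hand side. Writing the sum as $(s_1(\bs x)+s_1(\bs y))\sum_k(-1)^k k!(d-k)!\,s_k(\bs x)s_{d-k}(\bs y)$ and distributing, I apply the key lemma inside each piece to obtain
\[
\sum_{k=0}^{d}(-1)^k k!(d-k)!(k+1)\,s_{k+1}(\bs x)s_{d-k}(\bs y)\; +\; \sum_{k=0}^{d}(-1)^k k!(d-k)!(d-k+1)\,s_k(\bs x)s_{d+1-k}(\bs y)\,.
\]
Collecting the factorials gives $k!(k+1)(d-k)!=(k+1)!(d-k)!$ in the first sum and $k!(d-k)!(d-k+1)=k!(d+1-k)!$ in the second. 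After reindexing the first sum by $k\mapsto k-1$ so that both sums feature the monomial $s_k(\bs x)s_{d+1-k}(\bs y)$ with coefficient $k!(d+1-k)!$, the two sums acquire opposite signs $(-1)^{k-1}$ and $(-1)^k$ on their overlap $1\le k\le d$. Every interior term cancels, and only the boundary terms $k=0$ and $k=d+1$ survive; these yield $(d+1)!\,s_{d+1}(\bs y)$ and $(-1)^d(d+1)!\,s_{d+1}(\bs x)$ respectively, which is precisely the right-hand side.

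There isn't really a hard step here — the identity is an almost-mechanical consequence of the multiplication rule for elementary symmetric polynomials modulo squares. The only point that needs a moment of care is the reindexing of the shifted sum (keeping track of the sign $(-1)^{k-1}$ versus $(-1)^k$) so that the telescoping cancellation is visible; this is also where the factor of $(d+1)!$ emerges uniformly from combining $(k+1)!(d-k)!$ at $k\mapsto k+1$ with $k!(d+1-k)!$, both of which agree on the overlap.
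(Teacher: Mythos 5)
Your proof is correct and follows essentially the same approach as the paper: both hinge on the identity $s_1(\bs x)s_k(\bs x)=(k+1)s_{k+1}(\bs x)$ modulo $J$, then distribute $s_1(\bs x)+s_1(\bs y)$ over the alternating sum and observe the telescoping. The paper states the resulting two sums and reads off the answer; you carry out the reindexing and boundary-term bookkeeping explicitly, which is a fair amount of useful detail but not a different argument.
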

 
\begin{proof} 
\pushQED{\qed}
It is not hard to verify that there is an equality 
\[
s_1(\bs x)s_{k}(\bs x)=(k+1)s_{k+1}(\bs x)\,; 
\]
by symmetry, this implies also that $s_1(\bs y)s_{d-k}(\bs y)=(d-k+1)s_{d-k+1}(\bs y)$. Using these one gets equalities
\begin{align*}
 (s_1(\bs x)+s_1(\bs y))\sum_{k=0}^d (-1)^k &  k! (d-k)! s_{k}(\bs x)s_{d-k}(\bs y) \\
 & = \sum_{k=0}^d (-1)^k   k! (d+1-k)! s_{k}(\bs x)s_{d-k+1}(\bs y)  \\
   &\phantom{this is not a sum} +\sum_{k=0}^d (-1)^k  (k+1)! (d-k)! s_{k+1}(\bs x) s_{d-k}(\bs y)\\
&=(d+1)! (s_{d+1}(\bs y)+(-1)^d  s_{d+1}(\bs x)) \qedhere
\end{align*} 
\end{proof}

Let $V$ be a $k$-vector space of dimension $2n$, and $\Lambda$ the exterior algebra on $V$. Let $e_1,\dots, e_n, f_1,\dots, f_n$ be a basis of $V$ and set 
\[
w:=\sum_{i=1}^n e_i f_i\,;
\]
this is an element in $\Lambda_{2}$. The result below is well known when $\chr k=0$ and can be deduced as a special case of the Hard Lefschetz Theorem; see~\cite[page~122]{GH}. However, we have been unable to find an argument that covers also the case of positive characteristics in the literature.

\begin{proposition}
\label{pr:maximal-rank}
If $\chr k=0$ or $\chr k>(n+1)/2$, then the multiplication map $w\colon \Lambda_i\to \Lambda_{i+2}$ has maximal rank for every $i$; in other words, $w\colon \Lambda_i\to \Lambda_{i+2}$ is injective for $i\leq n-1$ and surjective for $i\geq n-1$.  
\end{proposition}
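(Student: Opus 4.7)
The plan is to convert the proposition into a weak Lefschetz statement for a commutative truncated polynomial ring and then deduce this statement from Lemma~\ref{le:relation}.

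\smallskip

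First, I would use the super-tensor decomposition $\Lambda = \bigotimes_{i=1}^{n}\Lambda(e_{i},f_{i})$ to write $w = \sum_{i}w_{i}$ as a sum of pairwise commuting operators $w_{i} := e_{i}f_{i}\cdot$, each of even parity; every $w_{i}$ annihilates $e_{i},\,f_{i}$ and $e_{i}f_{i}$ in the $i$-th factor and sends $1$ to $e_{i}f_{i}$. Consequently, for every pair of disjoint subsets $E,F\subseteq\{1,\dotsc,n\}$ the subspace $\Lambda_{E,F}$ spanned by the tensor monomials whose $i$-th factor is $e_{i}$ for $i\in E$, is $f_{i}$ for $i\in F$, and lies in $\{1,\,e_{i}f_{i}\}$ otherwise, is $w$-invariant, giving $\Lambda=\bigoplus_{E\cap F=\emptyset}\Lambda_{E,F}$. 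Setting $u_{i}:=e_{i}f_{i}$ for $i\notin E\cup F$ identifies $\Lambda_{E,F}$ with the commutative square-zero algebra $A_{m}:=k[u_{1},\dotsc,u_{m}]/(u_{1}^{2},\dotsc,u_{m}^{2})$ (with $m:=n-|E|-|F|$), and $w$ then acts as multiplication by $s_{1}(\bs u):=u_{1}+\cdots+u_{m}$. A direct translation of the bigraded condition $p+q\le n-1$ on $\Lambda$ becomes $d\le(m-1)/2$ on $A_{m}$, reducing the proposition to maximal rank of $s_{1}(\bs u)\cdot$ between consecutive degrees on each $A_{m}$ with $m\le n$.

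\smallskip

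Next I would exploit the Gorenstein structure of $A_{m}$, whose socle lies in top degree $m$ and whose pairing $(A_{m})_{d}\times(A_{m})_{m-d}\to(A_{m})_{m}\cong k$ is perfect. Since multiplication by $s_{1}(\bs u)$ is self-adjoint under this pairing, the maps $s_{1}(\bs u)\cdot\colon(A_{m})_{d}\to(A_{m})_{d+1}$ and $s_{1}(\bs u)\cdot\colon(A_{m})_{m-d-1}\to(A_{m})_{m-d}$ have equal rank; so injectivity at one degree is equivalent to surjectivity at the dual degree, and it suffices to show surjectivity of $s_{1}(\bs u)\cdot$ onto $(A_{m})_{d+1}$ for $d+1\ge(m+1)/2$. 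For each $K\subseteq\{1,\dotsc,m\}$ with $|K|=d+1$, applying Lemma~\ref{le:relation} with $\bs y=\{u_{j}:j\in K\}$ and $\bs x=\{u_{j}:j\notin K\}$ yields an element $\alpha_{K}\in(A_{m})_{d}$ with
\[
s_{1}(\bs u)\cdot\alpha_{K} \;=\; (d+1)!\,u_{K},
\]
because $s_{d+1}(\bs x)=0$ in $A_{m}$ (as $|\bs x|=m-d-1<d+1$). Under the hypothesis $\chr k>(n+1)/2\ge(m+1)/2$, the factor $(d+1)!$ is a unit at the boundary $d+1=\lceil(m+1)/2\rceil$ of the surjective range, so every basis monomial of $(A_{m})_{d+1}$ lies in the image, yielding surjectivity there.

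\smallskip

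The main obstacle I foresee is extending this argument to the remainder of the surjective range, i.e., to degrees $d+1$ with $(m+1)/2<d+1\le m$ where $(d+1)!$ may vanish modulo $\chr k$. For these the target $(A_{m})_{d+1}$ is of small dimension, and I would handle them using the direct identity $s_{1}(\bs u)\cdot u_{I} = \sum_{j\notin I}u_{I\cup\{j\}}$ for $|I|=d$, verifying by an explicit combinatorial rank computation that the image spans $(A_{m})_{d+1}$; the Gorenstein symmetry used earlier then propagates each such high-degree surjection back to injectivity at small degrees. Arranging these two strategies so that the threshold $\chr k>(n+1)/2$ is exactly sufficient across every degree is the delicate technical heart of the argument.
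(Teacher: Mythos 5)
Your decomposition of $\Lambda$ into $w$-invariant pieces $\Lambda_{E,F}\cong A_m$ is correct and a genuinely different organization from the paper's; the paper instead reduces by the socle of the full $\Lambda$ to the single map $\Lambda_{n-1}\to\Lambda_{n+1}$ and then argues by induction on $n$, so that the residual "diagonal" case $\prod_{i=1}^m e_if_i$ automatically forces $n=2m-1$ odd. However, your invertibility claim has a genuine gap when $m$ is even. You assert that $\chr k > (n+1)/2 \ge (m+1)/2$ makes $(d+1)! = \lceil(m+1)/2\rceil!$ a unit at the boundary degree. For $m$ odd this is fine, since then $\lceil(m+1)/2\rceil = (m+1)/2 < \chr k$. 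But for $m$ even, $\lceil(m+1)/2\rceil = m/2+1$, and the hypothesis $\chr k > (m+1)/2 = m/2 + \tfrac12$ only gives $\chr k\ge m/2+1$, not strict inequality. Concretely, when $m=n$ is even and $\chr k = n/2+1$ is prime (for instance $n=4$, $\chr k=3$, or $n=8$, $\chr k=5$), the factorial $(n/2+1)!$ vanishes and Lemma~\ref{le:relation} yields the useless identity $w\cdot\alpha_K=0$, even though one can check directly that maximal rank still holds there. Incidentally, the "main obstacle" you flag, namely degrees above the boundary, is a non-issue: surjectivity of multiplication by a degree-one form propagates upward automatically in any standard graded algebra, so only the boundary degree matters.

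The gap is repairable within your framework. One fix is to prove weak Lefschetz for $A_m$ directly only for odd $m$, where your factorial bound is safe, and then deduce the even case from the odd case $m-1$ via the decomposition $A_m = A_{m-1}\oplus u_m A_{m-1}$ (the multiplication by $s_1(\bs u)$ becomes block-triangular, and injectivity at degree $d\le m/2-1$ reduces to injectivity of $s_1$ on $A_{m-1}$ in degrees $\le (m-2)/2$). A second fix is to do what the paper does: use the socle of the whole exterior algebra $\Lambda$ to propagate injectivity down from $i=n-1$ before decomposing; since $\Lambda_{n\pm1}$ only meets pieces $\Lambda_{E,F}$ with $m=n-|E|-|F|$ odd, the problematic even-$m$ boundary never arises. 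Either way, the observation is that the propagation step must jump between pieces of different parity of $m$, so it cannot be carried out piecewise inside a single $A_m$; your reduction "to each $A_m$ separately" discards exactly the cross-piece moves that the paper's socle argument exploits.
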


\begin{proof}
To begin with, there is an isomorphism $\Hom k{\Lambda}k\cong \Lambda (2n)$ of $\Lambda$-modules and from this it follows that it suffices to verify that multiplication by $w$ is injective in degrees $\le n-1$. Moreover, it suffices to verify injectivity for $i=n-1$.

Indeed, assume $w\colon \Lambda_{i}\to \Lambda_{i+2}$ is injective for some $i\le n-1$. For any nonzero element $\lambda$ of degree $i-1$, there exists an element $\nu$ in $\Lambda_{1}$ for which $\lambda\nu\ne 0$; this is because the socle of $\Lambda$ is $\Lambda_{2n}$. Then $w\lambda\nu \ne 0$ implies $w\lambda \ne 0$. An iteration yields the desired result.

To reiterate: It suffices to verify that $w\colon \Lambda_{n-1}\to \Lambda_{n+1}$ is injective. Since the ranks of the source and target coincide, this is equivalent to verifying that the map is surjective.  We prove this by an induction on $n$, the base case  $n=1$ being obvious. 

Suppose $n\ge 2$. Then $\Lambda_{n+1}$ has a $k$-basis consisting of monomials
\[
\mu =e_{i_1} \dots  e_{i_a}  f_{j_1} \dots  f_{j_b}
\]
where $1\leq a,b\leq n$ with $a+b=n+1$, and $i_1<\dots<i_a$ and $j_1<\dots j_b$,  

If for some $h$ we have $i_h\not\in \{j_1,\dots, j_b\}$, then  $e_{i_h}w=e_{i_h}w_1$ with 
\[
w_1=\sum_{v=1, v\neq i_h}^n e_v f_v
\]
and, by induction on $n$, there exists $\lambda \in \Lambda_{n-2}(U)$ such that 
\[
\lambda w_1=e_{i_1} \dots \hat e_{i_h}  \dots e_{i_a}  f_{j_1} \dots   f_{j_b}\,.
\]
Here $U$ is the $k$-subspace of $V$ generated by  $\{e_1,\dots, e_n, f_1,\dots, f_n\} \setminus \{e_{i_h}, f_{i_h}\}$. Then 
\[
\nu e_{i_h} w= \nu e_{i_h} w_1=\pm e_{i_h}  \nu w_1=\pm \mu\,.
\]
A similar argument settles the case when there exists $h$ with $j_h\not\in \{i_1,\dots, i_a\}$.

It remains to consider the case $a=b$ and  $\{i_1,\dots, i_a\}=\{j_1,\dots,j_b\}$.  Set $m=a=b$ so that $n=2m-1$. We can assume (renaming the indices) that $i_h=j_h=h$ for $h=1,\dots, m$, that is to say, $\mu=\prod_{i=1}^m e_i f_i$. 

We apply Lemma~\ref{le:relation}  as follows: we set $v=m$,  $u=m-1$ and $d=m-1$  and  
\begin{alignat*}{2}
&x_i\to e_{m+i} f_{m+i}&  \qquad &\text{ for }i=1,\dots, m-1\\
&y_i\to e_i f_i& \qquad &\text{  for } i=1,\dots, m \,.
\end{alignat*}
Such a specialization makes sense because the elements $x_{i}y_i$ have  square zero and  commute among themselves. 
Since  $s_1(s')+s_1(t')=w$,  $s_{m}(t')=\prod_{i=1}^m  s_i f_i$ and  $s_{m}(s')=0$ we obtain: 
\[
w\sum_{k=0}^{m-1} (-1)^k  k! (m-1-k)! s_{k}(s') s_{m-1-k}(t')=m! \prod_{i=1}^m  s_i f_i.
\]
Since we are assuming that $\chr k=0$ or $\chr k>(n+1)/2$ we have that $m!$ is invertible and hence we may conclude that  $w\nu =\prod_{i=1}^m  s_i f_i$ with 
\[
\nu =\frac{1}{(m!)} \sum_{k=0}^{m-1} (-1)^k  k! (m-1-k)! s_{k}(s') s_{m-1-k}(t').
\]
This completes the proof.
\end{proof}

\section{Betti Tables and numerology}
\label{se:betti}
In this section we collect some observation, and questions, concerning the algebras studied in this work. To begin with, consider the following Betti tables for the representations $\fsl_n:k^n$,  computed in Theorem~\ref{th:sl}, for the first few values of $n$.

\footnotesize{
\begin{eqnarray*}
\begin{array}{c|c|ccccccccccccc}
&&0&1&2&3&4&5&6&7&8&9&10&11&12\\\hline
\fsl_2:k^2&0&1&-&-&-&-&&&&&&&&\\
&1&-&3&-&-&-&&&&&&&&\\
&2&-&-&\fbox{5}&4&1&&&&&&&&\\\hline
\fsl_3:k^3&0&1&-&- &-&-&-&-&&&&&&\\
&1&-&8&12&-&-&-&-&&&&&&\\
&2&-&-& -&\fbox{14}&14&6&1&&&&&&\\\hline
\fsl_4:k^4&0&1&- &- & -&- &- & -&-&-&&&&\\
&1&-&15&40&40&- &- & -&-&-&&&&\\
&2&-&- & -& -&\fbox{42}&48&27&8&1&&&&\\\hline
\fsl_5:k^5&0&1&- &- & -  &-   &-   &   -&  -&- & -&-&&\\
&1&-&24&90&155 &130 &-   &   -&  -&- & -&-&&\\
&2&-&- & -& -&-&\fbox{132}&165&110&44&10&1&&\\\hline
\fsl_6:k^6&0&1&- &-  & - &-   &-   &         -&  -& - &  -& -& -& -\\
&1&-&35&168&399&560 &427 &         -&  -& - &  -& -& -& -\\
&2&-&- &  -&  -&-   &  - &\fbox{429}&572&429&208&65&12& 1
\end{array}
\end{eqnarray*}
}

\normalsize

The framed  numbers in the table are the \emph{Catalan numbers}; see \cite[A000108]{OEIS}. This is true for each $n$, and what is more, the \emph{Catalan triangle} introduced by Shapiro \cite{LouisShapiro}, appears as well. 

Indeed, the sequence $(C_n)_{n\ge 0}$ of Catalan numbers, which has numerous combinatorial interpretations~\cite{Stanley, Koshy}, can be defined as follows:
\[
C_n:={\frac{1}{n+1}}\binom{2n}n =\binom{2n}n- \binom{2n}{n+1}\,.
\]
The  \emph{Segner's recursion formula} $C_{n+1}=\sum_{i,j:i+j=n} C_i C_j$ can be rewritten in terms of the generating function $C(x)=\sum_{n\ge 0} C_n x^n$ as follows: $x\,C^2(x)=C(x)-C_0$.

The entries $B(N,r)$ of the Catalan triangle, with $N,r \ge 1$, are defined by the higher moments $\gamma^r(x)=:\sum_{N\ge 1}B(N,r)\:x^N$ of the generating function $\gamma(x)=C(x)-C_0=C(x)-1$. In other words, 
\begin{align*} B(N,r)=\sum_{\begin{subarray}{l}  
        i_1,i_2,\dots, i_r\ge 1\\
        \sum_j i_j=N    
      \end{subarray}} C_{i_1}C_{i_2}\cdots C_{i_r}.
\end{align*}
It has been shown in \cite{LouisShapiro} that $B(N,r)= \frac rN \binom{2N}{N-r}$, which also makes sense for $r<0$.

The Catalan triangle can be recovered when taking differences of columns in the Pascal triangle:
\begin{align*}
\binom{2n}i - \binom{2n}{i+2} & = \left(1- \frac{(2n-i)(2n-i-1)}{(i+1)(i+2)}\right) \binom{2n}i \\
&=2\:  \frac{(i+1-n)(2n+1)}{(i+1)(i+2)}\: \binom{2n} i\\
&= \frac{i+1-n}{n+1}\: \frac{(2n+1)(2n+2)}{(i+1)(i+2)}\: \binom{2n} i\\
&=\frac{i+1-n}{n+1}\: \binom{2n+2}{i+2}\\
&=B(n+1,i+1-n)
\end{align*}
This proves the occurrence of the Catalan triangle in the Betti table of $\fsl_n:k^n$; compare Theorem \ref{th:sl}. The many combinatorial interpretations of the Catalan triangle~\cite{Koshy,LouisShapiro} raise the question: \emph{Is there a minimal free resolution of the moment map of $\fsl_n:k^n$ that underlies the occurrence of the Catalan triangles}?

\medskip

The Betti tables of the moment map for the representations $\fsp_n:k^{2n}$, computed in Theorem \ref{th:sp}, are also worth pointing to.
For $n=1$ the Betti table is the same as that of $\fsl_2:k^{2}$. For $n=2,3$ one gets the following.

\footnotesize
{
\begin{eqnarray*}
\begin{array}{c|c|ccccccccccccc}
& &0&  1&      2&    3&   4&    5&  6&  7&    8&9&10&11&12\\\hline
\fsp_2: k^{4}&0&1&  -&      -&    -&   -&    -&  -&  -&    -&&&&\\
&1&-& 10&      -&    -&   -&    -&  -&  -&    -&&&&\\
&2&-&  -&    100&  280& 392&  328&167& 48&    6&&&&\\\hline
\fsp_3: k^{6}&0&1&  -&      -&    -&   -&     -&    -&    -&   -&   -&  -& - &  - \\
&1&-& 21&      -&    -&   -&     -&    -&    -&   -&   -&  -& - &  - \\
&2&-&  -&   525 & 2520& 6503&11088&13365&11660&7359&3288&989&180& 15
\end{array}
\end{eqnarray*}
}

\normalsize

The noteworthy feature here is that the Betti table has only two strands, and the jump from one to the next occurs after the second step.

\subsection*{Poincar\'e series}
Finally we discuss the Poincar\'e series of $k$ over the coordinate algebra of the zero fibre. Let $R$ be a standard graded algebra, as in Section~\ref{se:koszul}. For simplicity, we focus on the $\BN$-graded case. We are interested in the Betti numbers of $k$ as an $R$-module. A basic question is when the corresponding Poincar\'e series $\po [R]k{u}$ is rational; it is not so for a general graded ring $R$; see~\cite[\S4.3]{Av:barca}. \emph{What about for algebras of the form $S/(\mu)$, for a moment map $\mu$?}

The Poincar\'e series for a complete intersection is rational~\cite[Theorem~9.2.1]{Av:barca}, so this takes care of $0$-modular representations. Thus, as for the Koszul property, the question is moot only for small representations. Rationality also holds when $R$ is Koszul, for then there is an equality $\po[R]k{s,u}= \hilb R{-su}^{-1}$. This settles the case of the standard representations $\fgl_{n}:k^{n}$ and $\fso_{n}:k^{n}$.

We do not know the answer for $\fsl_{n}:k^{n}$ and $\fsp_{n}:k^{n}$. For $\fsl_2:k^2$ Gr\"obner basis calculations (using Macaulay2) suggest that the Betti table of the minimal free resolution of $k$ is upper triangular, with a new strand appearing after every $3$ steps in the homological degree.  The following formula for the Poincar\'e series best fits the available numerical data
\begin{align*}
\po [S/(\mu)]k{s,u} ={(1+us)^2\over (1-us)^3(1+us)-2u^3 s^4}\,.
\end{align*}
For $\fsl_3:k^3$ we also encounter a triangular shape but we have been unable to guess what the Poincar\'e series might be.

Following the arXiv posting of an earlier version of this manuscript, Jan-Erik Roos wrote to us (email, dated 17 June 2017) that he could answer some of the questions  posed in the preceding paragraphs. Pointing to his paper~\cite{Roos}, he writes ``I have in principle solved ``all" cases of the Homological Behaviour of families of quadratic forms in four variables...'' Roos notes that for $\fsl_2:k^2$  the ring $S/(\mu)$ is isomorphic to the one in \cite[Case 7, pp. 427]{Roos}, and that the corresponding Poincar\'e series is precisely the one we proposed above. Furthermore, he proposes the following  formula for the Poincar\'e series for the case of $\fsl_{3}:k^{3}$
\begin{align*}
\po [S/(\mu)]k{s,u} ={(1+us)^3\over (1-us)(1-2us-4u^2s^2-2u^3s^3+ u^4s^4)-2u^4 s^5}\,.
\end{align*}
and suggests a method to tackle $\fsl_n:k^n$, for arbitrary $n$; we hope to develop these ideas in due course.

\begin{ack}
Our thanks to Lucho Avramov for helpful conversations regarding this work; in particular, for pointing out Lemma~\ref{le:massey}, and the work of Hreinsdottir~\cite{Hreinsdottir}. Part of this article is based on work supported by the National Science Foundation under Grant No.\,0932078000, while AC and SBI were in residence at the Mathematical Sciences Research Institute in Berkeley, California, during the 2012--2013 Special Year in Commutative Algebra.   AC was supported by INdAM-GNSAGA and PRIN ``Geometry of Algebraic Varieties'' 2015EYPTSB\_{008}.  SBI was partly supported by NSF grants DMS-1503044.
\end{ack}


\begin{thebibliography}{99}

\bibitem{Av}
L.~L.~Avramov, 
\emph{The Hopf algebra of a local ring}, Izv. Akad. Nauk SSSR Ser. Mat., \textbf{38} (1974), 253--277. 

\bibitem{Av:barca}
L.~L.~Avramov,  \emph{Infinite free resolutions}, Six lectures on commutative algebra (Bellaterra, 1996),
Progr. Math. \textbf{166}, Birkhuser, Basel, 1998; 1--118.

 \bibitem{ACI}
L.~L.~Avramov, A.~Conca and S.~B.~Iyengar, 
\emph{Free resolutions over commutative Koszul algebras}, 
Math. Res. Lett. \textbf{17} (2010),  197--210. 

 \bibitem{ACI2}
L.~L.~Avramov, A.~Conca and S.~B.~Iyengar, 
\emph{Subadditivity of syzygies of Koszul algebras}
Math. Ann. \textbf{361} (2015), 511--534.

\bibitem{BHP} M. ~Bordemann, H.-C. ~Herbig and M. J. ~Pflaum, \emph{A homological approach to singular reduction in deformation quantization}, in Ed: S. Sternberg, `Singularity theory', World Sci. Publ., Hackensack, NJ, 2007, 443--461.

\bibitem{BV}
W.~Bruns and U.~Vetter,
\emph{Determinantal rings}, Lect. Notes Math. \textbf{1327}, Springer 1988.


\bibitem{CDR}
A.~Conca, E.~De Negri and M. E.~Rossi, \emph{Koszul Algebras and regularity}, Commutative algebra, 285--315, Springer, New York, 2013.

\bibitem{Fr}
R.~Fr\"oberg, 
\emph{Koszul algebras}, 
Advances in Commutative Ring Theory,  Proceedings of Fez Conference 1997. In: Lectures Notes in Pure and Applied Mathematics, vol. \textbf{205}, Marcel Dekker, New York (1999)

\bibitem{GH}
P.~Griffiths and J.~Harris,
\emph{Principles of algebraic geometry}, Reprint of the 1978 original. Wiley Classics Library. John Wiley \& Sons, Inc., New York, 1994. 

\bibitem{GM}
V.~K.~A.~M.~Gugenheim and J.~P.~May, 
 \emph{On the theory and applications of differential torsion products},
Memoirs of the American Mathematical Society, \textbf{142}, American Mathematical Society, Providence, R.I., 1974.
 
\bibitem{HT}
M~Henneaux and C.~Teitelboim, 
\emph{Quantization of gauge systems}, Princeton University Press, Princeton, NJ, 1992.

\bibitem{KoszulComplex} 
H.-C. Herbig and G. W. Schwarz, \emph{The Koszul complex of a moment map}, J. Symplectic Geom., \textbf{11} (2013), 497--508.

\bibitem{Hreinsdottir} 
F. ~Hreinsdottir, \emph{The Koszul dual of the ring of commuting matrices}, Communications in Algebra, \textbf{26} (2011), 3807--3819.

\bibitem{Koshy}
T. ~ Koshy, \emph{Catalan numbers with applications}, Oxford University Press. xiv, 422~p. 

\bibitem{Kr}
D.~Kraines,
\emph{Massey higher products}, Trans. Amer. Math. Soc. \textbf{124} (1966), 431--449.

\bibitem{May}
J.~P.~May,
\emph{Matrix Massey products}, J. Algebra \textbf{12} (1969),  533--568. 

 \bibitem{OEIS}
\emph{The On-Line Encyclopedia of Integer Sequences}, published electronically at http://oeis.org, 2014.

\bibitem{QuadraticAlgebras} A. ~Polischuk, L. ~Positselski
\emph{Quadratic Algebras}, AMS University Lecture Series, 2005.

\bibitem{Roos}
J.-E.~Roos, \emph{Homological properties of the homology algebra of the Koszul complex of a local ring: examples and questions},
J. Algebra \textbf{465} (2016), 399--436.


\bibitem{LouisShapiro}
L. W. ~Shapiro,  \emph{A Catalan triangle}, Discrete Math. \textbf{14} (1976),  83--90. 

\bibitem{DOsonQuotients} 
G. W. ~Schwarz, \emph{Differential operators on quotients of simple groups}, J. Algebra \textbf{169} (1994), 248--273.

\bibitem{LiftingDOs} 
G. W. ~Schwarz, \emph{Lifting differential operators from orbit spaces}, Ann. Sci. \'Ecole Norm. Sup. (4), \textbf{28} (1995), 253--305.

\bibitem{Stanley}
R. P. ~Stanley, 
\emph{Enumerative combinatorics. Volume 2.}, Cambridge Studies in Advanced Mathematics. \textbf{62}. Cambridge University Press. xii, 581 p. 

\bibitem{Tate}
J.~Tate, \emph{Homology of Noetherian rings and local rings}, Illinois J. Math. \textbf{1} (1957), 14--27. 

\bibitem{Vinberg}\`E. B. Vinberg, \emph{Complexity of actions of reductive groups}, Funktsional. Anal. i Prilozhen. \textbf{20} (1986), 1--13, 96.

 \end{thebibliography}
 \end{document}